\documentclass[reqno,11pt,20pt]{amsart}
\usepackage[ansinew]{inputenc}
\usepackage[english]{babel}
\usepackage[all]{xy}
\usepackage{amsmath,latexsym,amssymb,verbatim}
\usepackage{ stmaryrd }

\textwidth 460pt 
\setlength{\evensidemargin}{0.2cm}
\setlength{\oddsidemargin}{0.2cm}
\input arrow.tex

\newcommand{\LL}{{\mathbb L}}
\newcommand{\FF}{{\mathtt F}}
\newcommand{\M}{{\mathcal M}}

\newcommand{\CR}{{\mathcal R}}

\newcommand{\ZZ}{{\mathbb Z}}

\newcommand{\RR}{\mathbb R}
\newcommand{\0}{\mathbf 0}
\newcommand{\un}{\mathbf 1}

\newcommand{\mm}{{\mathfrak m}}

\DeclareMathOperator{\limi}{{lim}}
\newcommand{\plim}[1]{\,\underset{#1}{\underset{\leftarrow}{\limi}}\,}

\newcommand{\enumera}{\begin{enumerate}}
\newcommand{\eenumera}{\end{enumerate}}

\newcommand{\A}{{\mathbb A}}

\newcommand{\stella}{{\scriptscriptstyle\bigstar}}
 
\DeclareMathOperator{\Hom}{{Hom}}

\DeclareMathOperator{\Res}{{Res}}
\DeclareMathOperator{\di}{{d}}

\newcommand{\ilim}[1]{\,\underset{#1}{\underset{\to}{\limi}}\,}
\newcommand{\alineas}[1]{\begin{array}{#1}}
\newcommand{\alinea}{\begin{array}{l}}
\newcommand{\ealinea}{\end{array}}
\newcommand{\ealineas}{\end{array}}

\newcommand{\pun}{{\scriptscriptstyle \bullet}}

\newcommand{\HHom}{\mathcal{H}om}
\DeclareMathOperator{\Link}{Link}

\newcommand{\Shv}{\operatorname{Shv}}

\newcommand{\supp}{\operatorname{supp}}

\DeclareMathOperator{\Ext}{{Ext}}

\DeclareMathOperator{\id}{{id}}


\theoremstyle{plain}
\newtheorem{thm}{Theorem}[section]
\newtheorem{lem}[thm]{Lemma}

\newtheorem{prop}[thm]{Proposition}
\newtheorem{defn}[thm]{Definition}
\newtheorem{rem}[thm]{Remark}
\newtheorem{rems}[thm]{Remarks}

\newtheorem{cosa}[thm]{}

\numberwithin{equation}{thm}

%

\begin{document}

\title{Hochster's type formulae}

\author{Fernando Sancho de Salas and Alejandro Torres Sancho}

\address{ Departamento de
Matem\'aticas and Instituto Universitario de F\'isica Fundamental y Matem\'aticas (IUFFyM)\newline
Universidad de Salamanca\newline  Plaza de la Merced 1-4\\
37008 Salamanca\newline  Spain}
\email{fsancho@usal.es}
\email{atorressancho@usal.es}

\subjclass[2020]{13F55,  13D45}

\keywords{Hochster formula, Stanley--Reisner theory}

\thanks {Work supported by Grant PID2021-128665NB-I00 funded by MCIN/AEI/ 10.13039/501100011033 and, as appropriate, by ``ERDF A way of making Europe''.}
 
\begin{abstract}   We give an elementary proof and generalize some Hochsters's type formulae on local cohomology and Ext's of squarefree modules.
\end{abstract}



\maketitle

\section*{Introduction} 

Let $k$ be a  commutative ring and $R=k[x_1,\dots,x_n]$ the polynomial ring over $k$. Let $\A^n_{\FF_1}$ be the finite poset of subsets of the finite set $\Delta_n=\{1,\dots, n\}$ and let us denote by $\Shv_k(\A^n_{\FF_1})$ the category of sheaves of $k$-modules on $\A^n_{\FF_1}$. In \cite{Ya01},  Yanagawa shows that a sheaf of $k$-modules on $\A^n_{\FF_1}$ has an associated squarefree $R$-module; this defines an exact functor:
\[ \pi^\stella\colon \Shv_k(\A^n_{\FF_1})\longrightarrow  R-\text{mod.} \] (where $R-\text{mod.}$ denotes the category of all $R$-modules) which maps each sheaf $F$ on $\A^n_{\FF_1}$ to its associated squarefree module. This functor has an exact right  adjoint
\[\pi_\stella\colon  R-\text{mod.} \longrightarrow  \Shv_k(\A^n_{\FF_1}).\] 
Let  $D(R)$ be the derived category of complexes of $R$-modules and $D(\A^n_{\FF_1})$ the derived category of complexes of sheaves of $k$-modules on $\A^n_{\FF_1}$. A complex $F\in  D(\A^n_{\FF_1})$ is called {\em pointwise perfect} if its stalk $F_p$ at any point $p\in\A^n_{\FF_1}$ is a perfect complex of $k$-modules, i.e., $F_p$  is quasi-isomorphic to a bounded complex of finitely generated projective $k$-modules. 

Let $k_{\{\un\}}$ be the constant sheaf $k$ on $\A^n_{\FF_1}$ supported at the point $\un\in\A^n_{\FF_1}$ that represents the total subset of $\Delta_n$. For any $F\in D(\A^n_{\FF_1})$, let us denote
\[ D(F)=\RR\HHom_{\A^n_{\FF_1}}^\pun(F, k_{\{\un\}})\] and for any $M\in D(R)$ let us denote
\[ D(M)=\RR\Hom_R^\pun(M,\Omega^n_{R/k})\] where $\Omega^n_{R/k}$ is the module of highest K\"ahler differentials of $R$ over $k$. 
  The main theorem of this paper is the following:\medskip

\noindent{\bf Theorem.}[Thm. \ref{Hochster}] For any $ M\in D (R)$, and any pointwise perfect complex $F\in D (\A^n_{\FF_1})$, one has
\[ \RR\Hom_R^\pun( M, \pi^\stella F)=\RR\Hom_{\A^n_{\FF_1}}^\pun(D(F),\pi_\stella D(M)).\]

We shall see that this theorem yields Hochster's type formulae whenever one is able to decompose $\pi_\stella D(M)$ as a direct sum, and this of course highly depends on the choice of $M$. The first example of this is given in Theorem \ref{Hochster-injective}, obtaining a Hochster  formula when $k$ is a field and $D(M)=I[r]$ for some injective $R$-module $I$ and some integer $r$.

Let us denote $\mm=(x_1,\dots,x_n)$ and $\mm_l=(x_1^l,\dots,x_n^l)$. In Theorem  \ref{cor2} we give a Hochster formula for the local cohomology modules $H^i_\mm(\pi^\stella F)$, which generalizes Yanagawa's results, stated for a sheaf $F$ of finite dimensional vector $k$-spaces, where $k$ is a field  (\cite[Thm. 3.3, Thm. 3.5]{Ya03}, see also \cite[Thm. 1.1]{BBR})  for any ring $k$ and any pointwise perfect complex $F$ of $k$-modules. In this case, the formula is a consequence of Theorem \ref{Hochster} and of the decomposition of $\pi_\stella H^n_\mm(\Omega^n_{R/k})$ given in Proposition \ref{residuo-descomp}, (a). We also give the explicit $R$-module structure, in terms of residue maps, of the right hand term  $\underset{p\in \overline{\supp(F)}}\bigoplus\, H^{i-c_p}_p(U_p,F)\otimes_k H^{c_p}_\mm(R_{C_p})$, in Remarks \ref{residuo}, (2).

In Theorem \ref{Hochster-extens} we give a Hochster formula for $\Ext_R^i(R/\mm_l,\pi^\stella F)$  for any pointwise perfect complex $F$. As a particular case, we obtain  Miyazaki's result (\cite[Thm. 1]{Mi}). In this case, the formula is a consequence of Theorem \ref{Hochster} and of the decomposition of $\pi_\stella \Hom_R(R/\mm_l, H^n_\mm(\Omega^n_{R/k}))$ given in Proposition \ref{residuo-descomp}, (b).

Finally, let us mention that the decompositions of Propostion \ref{residuo-descomp} are easily obtained by reduction to the case $n=1$, thanks to the additivity of $\pi_\stella$ (Proposition \ref{product}).

In the last thirty years, many Hochster's type formula have appeared in the literature (see specially \cite{ABZ}, where many Hochster's type formulae are obtained and compared with previous work on the subject), essentially providing Hochster's type formulae for arbitrary posets. We shall see at the end of the paper how these results essentially differ from ours.

%
%

\section{Preliminaries and notations}

\begin{cosa}[\bf The finite poset $\A^n_{\FF_1}$] {\rm  Let $\Delta_{n}=\{ 1,\dots,n\}$ be the finite set with $n$ elements. 

\begin{defn}{\rm We shall denote $$\A^n_{\FF_1}:=\mathcal{P}(\Delta_{n})$$ the set of subsets of $\Delta_{n}$, which is a finite topological space (a finite poset) with the topology given by the partial order defined by inclusion: $p\leq q\Leftrightarrow p\subseteq q$. Any finite poset $(X,\leq)$ will be considered as a topological space with its Alexandroff topology: a subset $U$ of $X$ is open if $p\in U$ and $q\geq p$ implies $q\in U$.}
\end{defn} We shall denote:

\begin{enumerate}

\item[$\bullet$] $\0\in\A^{n}_{\FF_1}$ the element representing the empty subset of $\Delta_{n}$. It is the unique closed point of $\A^{n}_{\FF_1}$.

\item[$\bullet$] $\un\in\A^{n}_{\FF_1}$ the generic point of $\A^n_{\FF_1}$, i.e., the element representing the total subset of $\Delta_{n}$. It is the unique open point of $\A^{n}_{\FF_1}$.

\item[$\bullet$] For each $p\in\A^n_{\FF_1}$, we denote
\[ \aligned U_p&=\text{ smallest open subset containing } p =\{ q\in\A^n_{\FF_1}: q\geq p\}
\\ C_p &=  \text{ closure of } p =\{ q\in\A^n_{\FF_1}: q\leq p\}
\endaligned\]

\item[$\bullet$] For each $i\in\Delta_{n}$, we shall denote by $U_i$ the open subset of $\A^{n}_{\FF_1}$ whose elements are the subsets of $\Delta_{n}$ containing $i$; in other words $U_i=U_{\{i\}}$. 

\item[$\bullet$] We shall denote   $H_i:=\A^{n}_{\FF_1}- U_i$ the hyperplane of $\A^{n}_{\FF_1}$ whose elements are the subsets of $\Delta_{n}$ contained in $\Delta_{n}-\{ i\}$. 

\item[$\bullet$] For any   $p\in \A^{n}_{\FF_1}$, $\widehat p$ denotes the complement subset: $\widehat p=\Delta_n - p$. 
\end{enumerate}


A finite simplicial complex $K$ (with at most $n$ vertices) is just a closed subset of $\A^n_{\FF_1}$. Recall that for any simplical complex $K\subseteq \A^n_{\FF_1}$ and any $p\in K$, the link of $p$ in $K$ is:
\[\Link_K(p)=\{ q\in K: q\neq \0, p\cap q=\0\text{ and } p\cup q\in K\}\] which is a closed subset of $K^*=K-\{\0\}$. If we denote $U_p^*=U_p-\{p\}$, one has an homeomorphism (equivalently, an order preserving bijection) $$\aligned \Link_K(p)&\overset\sim\longrightarrow   U_p^*\cap K\\ q&\longmapsto p\cup q\\ x-p&\longmapsfrom x\endaligned.$$

We shall denote by $\Shv_k(\A^n_{\FF_1})$ the category of sheaves of $k$-modules on $\A^n_{\FF_1}$ and by $D(\A^n_{\FF_1})$ the derived category of complexes of sheaves of $k$-modules on $\A^n_{\FF_1}$. We shall use the standard notations of the derived category of sheaves on a topological space. 
As it is well known, a sheaf $F$ of $k$-modules on $\A^n_{\FF_1}$ is equivalent to give, for each $p\in\A^n_{\FF_1}$, a $k$-module $F_p$, and for each $p\leq q$ a morphism of $k$-modules $r_{pq}\colon F_p\to F_q$, such that $r_{pp}=\id_{F_p}$ and $r_{ql}\circ r_{pq}=r_{pl}$ for any $p\leq q\leq l$. 

The constant sheaf $k$ on $\A^n_{\FF_1}$ is still denoted by $k$. For any $p\in \A^n_{\FF_1}$ we denote
\[ k_{U_p}\quad,\quad k_{C_p}\quad,\quad k_{\{p\}}\] the constant sheaf $k$ supported, respectively, on $U_p$, $C_p$ and $\{p\}$. We shall use the basic formulae
\[ \aligned &\RR\Hom_{\A^n_{\FF_1}}^\pun(k_{U_p}, F) = F_p\\ &\RR\Hom_{\A^n_{\FF_1}}^\pun(F, k_{C_p} ) =\RR\Hom_k^\pun(F_p,k)\\ &\RR\Hom_{\A^n_{\FF_1}}^\pun(k_{\{p\}}, F)=\RR\Gamma_p(U_p,F),\endaligned\] where $\RR\Hom^\pun$ is the right derived functor of $\Hom^\pun$ (the  complex of homomorphisms)  and  $\Gamma_p(U_p,\quad)$ denotes the functor of sections with support in $p$.

One has a natural isomorphism $$\A^n_{\FF_1}=\A^1_{\FF_1}\times \overset n\cdots\times \A^1_{\FF_1}$$
so an element $p\in \A^n_{\FF_1}$ is identified with a sequence $p=(p_1,\dots,p_n)$ with $p_i\in \A^1_{\FF_1}=\{\0,\un\}$. Explicitely, $p_i=\0$ if $i\notin p$ and $p_i=\un$ if $i\in p$.

For each $i=1,\dots , n$, we have the $i$-th projection 
\[\phi_i\colon \A^n_{\FF_1}\longrightarrow \A^1_{\FF_1}\] defined by $\phi_i^{-1}(\0)=H_i$  (and then $\phi_i^{-1}(\un)=U_i$). \medskip

If $F_1,\dots , F_n$ are sheaves on $\A^1_{\FF_1}$ we have a sheaf on $\A^n_{\FF_1}$:
\[ F_1\boxtimes \overset n\cdots\boxtimes F_n := \phi_1^{-1}F_1\otimes_k \cdots\otimes_k \phi_n^{-1}F_n \] whose stalkwise description is
\[ (F_1\boxtimes \overset n\cdots\boxtimes F_n)_p =(F_1)_{p_1}\otimes_k\cdots \otimes_k (F_n)_{p_n}\] with $p=(p_1,\dots ,p_n)$, via the equality $\A^n_{\FF_1}=\A^1_{\FF_1}\times \cdots\times \A^1_{\FF_1}$.

}\end{cosa}

\begin{cosa}[\bf Pointwise perfect complexes and duality] \label{pointwise-perfect}{\rm A complex $F\in D(\A^n_{\FF_1})$ is called {\em pointwise perfect} if $F_p$ is a perfect complex of $k$-modules for any $p\in \A^n_{\FF_1}$. For example, if $k$ is a field, then $F$ is pointwise perfect if and only if $F_p$ is a complex of $k$-vector spaces with bounded and finite dimensional cohomology. We shall denote by $D_{\text{\rm perf}}(\A^n_{\FF_1})$ the subcategory of pointwise perfect complexes. 

For any ring $k$, $D(k)$ denotes the derived category of complexes of $k$-modules. For any $E\in D(k)$, we   denote $E^\vee=\RR\Hom_k^\pun(E,k)$. One has a natural morphism $E\to E^{\vee\vee}$ which is an isomorphism if $E$ is perfect. For any $E\in D(k)$, we still denote by $E$ the constant complex in $D(\A^n_{\FF_1})$ defined by $E_p=E$ for any $p\in\A^n_{\FF_1}$ and the identity morphism $E_p\to E_q$ for any $p\leq q$.

For each $F\in D(\A^n_{\FF_1})$ we shall denote
\[ D(F):=\RR\HHom_{\A^n_{\FF_1}}^\pun(F,k_{\{\un\}}),\] 
where $\HHom_{\A^n_{\FF_1}}^\pun$ denotes the complex of sheaves of homomorphisms and $\RR\HHom_{\A^n_{\FF_1}}^\pun$ its right derived functor.

We shall use the following property (see \cite{ST1}):

If $F$ is pointwise perfect, so is $D(F)$, and for any $p\in\A^n_{\FF_1}$ and any $E\in D(k)$, one has:
\begin{equation}\label{formulaD(F)} \RR\Hom_{\A^n_{\FF_1}}^\pun(D(F),E\otimes_kk_{C_p})=\RR\Gamma_p(U_p,F) \overset\LL\otimes_k E\, [n-c_p ].\end{equation}

The {\em support} of a complex $F\in D(\A^n_{\FF_1})$ is the set
\[ \supp(F)=\{ p\in \A^n_{\FF_1}: F_p\neq 0\,\, (\text{in } D(k))\}.\] We shall denote by $\overline{\supp(F)}$ the closure of $\supp(F)$. Notice that $F_{\vert U_p}=0$ if $p\notin  \overline{\supp(F)}$.
}\end{cosa}

\begin{cosa}[\bf Yanagawa's functor and its right adjoint] {\rm  Let $R=k[x_1,\dots,x_n]$. If $M$ is a $\ZZ^n$-graded $R$-module (or $k$-module or a sheaf of $k$-modules) and $\alpha=(\alpha_1,\dots,\alpha_n)\in\ZZ^n$, then $M(\alpha)$ denotes the module $M$ shifted by $\alpha$, that is, it is the same  module $M$ but with the gradation
\[ [M(\alpha)]_{(m_1,\dots,m_n)}=M_{(m_1+\alpha_1,\dots,m_n+\alpha_n)}.\] One has that $M(\alpha)=M\otimes_RR(\alpha)$. Hence, for any (ungraded) $R$-module $M$, we shall denote $M(\alpha):=M\otimes_RR(\alpha)$.
For each $p\in\A^n_{\FF_1}$, $l\in\ZZ$, we shall denote
\[ l_p=(\alpha_1,\dots,\alpha_n), \text{ with } \alpha_i=\left\{\aligned l, &\text{ if } i\in p
\\ 0, &\text{ if } i\notin p\endaligned   \right.\]

For any $p\in\A^n_{\FF_1}$, ${\mathbf x}^p$ denotes the squarefree monomial ${\mathbf x}^p=\prod_{i\in p} x_i$. One has a natural isomorphism of $\ZZ^n$-graded $R$-modules $({\mathbf x}^p)=R(-1_p)$. For any closed subset $K\subseteq \A^n_{\FF_1}$, $I_K$ denotes the Stanley--Reisner ideal associated to $K$, i.e., the ideal generated by the monomials ${\mathbf x}^p$ with $p\notin K$. We shall denote $R_K=R/I_K$, the Stanley--Reisner ring associated to $K$. In particular, for each $p\in\A^n_{\FF_1}$, $R_{C_p}$ is the polynomial ring in the variables $x_i$, $i\in p$.

Let us consider Yanagawa's functor (\cite{Ya01})
\[\pi^\stella\colon \Shv_k(\A^n_{\FF_1})\longrightarrow   R-\text{mod} \] which associates, to each sheaf $F$, a squarefree module $\pi^\stella F$. If $F\to F'$ is a morphism of sheaves, then $\pi^\stella F\to\pi^\stella F'$ is a homogeneous (i.e. $\ZZ^n$-degree preserving) morphism of $\ZZ^n$-graded $R$-modules. We shall not use Yanagawa's original definition. Instead, we shall use an alternative construction given in \cite{ST1}: the functor $\pi^\stella$ is defined as the unique left exact functor   $\pi^\stella\colon  \medskip \Shv_k(\A^n_{\FF_1})\longrightarrow  R-\text{mod}$  that maps $ k_{U_p} $ to $({\mathbf x}^p)$ for any $p\in\A^n_{\FF_1}$ and the natural inclusion $k_{U_q}\hookrightarrow k_{U_p}$ to the natural inclusion $({\mathbf x}^q)\hookrightarrow ({\mathbf x}^p)$ for any $p\leq q$.

Further properties of $\pi^\stella$ are: It is an exact functor and it satisfies:
\[ \pi^\stella (k_K) = R_K\] for any closed subset $K\subseteq\A^n_{\FF_1}$.  Also notice that, $$\aligned \pi^\stella(k_{\{\un\}})=({\mathbf x}^\un)&=\Omega^n_{R/k}\\ {\mathbf x}^\un&\mapsto \di x_1\wedge\dots\wedge\di x_n\endaligned$$ where $\Omega^n_{\A^n_k/k}$ is the module of  highest K\"ahler differentials. 

The functor $\pi^\stella$ has a right adjoint (because it is exact and commutes with direct limits)
\[\pi_\stella\colon   R-\text{mod} \longrightarrow \Shv_k(\A^n_{\FF_1})\] whose stalkwise description is the following: for any $R$-module $M$ and any $p\in \A^n_{\FF_1}$,
\[(\pi_\stella M)_p=\Hom_R(({\mathbf x}^p),M)=M\otimes_R R(1_p)\] and for any $p\leq q$, the morphism $(\pi_\stella M)_p\to (\pi_\stella M)_q$ is obtained by applying $\Hom_R(\quad,M)$ to the inclusion $({\mathbf x}^q)\hookrightarrow ({\mathbf x}^p)$. Then,
$\pi_\stella$ is an exact functor. Moreover, if $M$ is a $\ZZ^n$-graded $R$-module, then $\pi_\stella M$ is a sheaf of $\ZZ^n$-graded $k$-modules (in fact, $R$-modules) on $\A^n_{\FF_1}$.

\begin{prop}[Additivity of $\pi_\stella$]\label{product} Let $R=k[x_1,\dots,x_n]$. For each $i=1,\dots ,n$, let $M_i$ be a $k[x_i]$-module. Then $M_1\otimes_k\cdots\otimes_kM_n$ is an $R$-module and:
\[ \pi_\stella (M_1\otimes_k\cdots\otimes_kM_n)= (\pi_\stella M_1)\boxtimes\cdots\boxtimes (\pi_\stella M_n),\] If each $M_i$ is a $\ZZ$-graded $k[x_i]$-module, then this equality is an isomorphism of $\ZZ^n$-graded sheaves of $k$-modules. 
\end{prop}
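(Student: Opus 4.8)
The strategy is to reduce everything to the one-variable case via the explicit stalkwise descriptions already at our disposal. Recall that for an $R$-module $M$ and $p\in\A^n_{\FF_1}$ the right adjoint is given by $(\pi_\stella M)_p=M\otimes_R R(1_p)$, with the transition map for $p\le q$ induced by the inclusion $(\mathbf{x}^q)\hookrightarrow(\mathbf{x}^p)$; and for the external tensor product of sheaves on the factors one has $(F_1\boxtimes\cdots\boxtimes F_n)_p=(F_1)_{p_1}\otimes_k\cdots\otimes_k(F_n)_{p_n}$ under the identification $\A^n_{\FF_1}=\A^1_{\FF_1}\times\cdots\times\A^1_{\FF_1}$, with transition maps the tensor products of the transition maps in each factor. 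So it suffices to check that both sides agree as sheaves, i.e. stalk by stalk together with the transition morphisms, and this will follow formally once we know it in a single variable.

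\textbf{Step 1: The one-variable identity.} First I would make explicit what $\pi_\stella$ does when $R=k[x]$ and $\A^1_{\FF_1}=\{\0,\un\}$. Here $(\mathbf{x}^{\0})=R$ and $(\mathbf{x}^{\un})=(x)=R(-1)$, so for a $k[x]$-module $N$ one gets $(\pi_\stella N)_{\0}=\Hom_R(R,N)=N$ and $(\pi_\stella N)_{\un}=\Hom_R((x),N)=N\otimes_R R(1)$, and the single transition map $(\pi_\stella N)_{\0}\to(\pi_\stella N)_{\un}$ is the one obtained by applying $\Hom_R(-,N)$ to the inclusion $(x)\hookrightarrow R$, i.e. multiplication by $x$ after the identification. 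This is a completely explicit description of the sheaf $\pi_\stella N$ on the two-point space, and it involves no choices.

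\textbf{Step 2: Compute both sides of the $n$-variable formula stalkwise.} Fix $p=(p_1,\dots,p_n)\in\A^n_{\FF_1}$. On the left, $(\pi_\stella(M_1\otimes_k\cdots\otimes_kM_n))_p=(M_1\otimes_k\cdots\otimes_kM_n)\otimes_R R(1_p)$. On the right, using the $\boxtimes$ formula and Step~1, $((\pi_\stella M_1)\boxtimes\cdots\boxtimes(\pi_\stella M_n))_p=(\pi_\stella M_1)_{p_1}\otimes_k\cdots\otimes_k(\pi_\stella M_n)_{p_n}=\bigl(M_1\otimes_{k[x_1]}k[x_1](1_{p_1})\bigr)\otimes_k\cdots\otimes_k\bigl(M_n\otimes_{k[x_n]}k[x_n](1_{p_n})\bigr)$. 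Since $R=k[x_1]\otimes_k\cdots\otimes_k k[x_n]$ and $R(1_p)=k[x_1](1_{p_1})\otimes_k\cdots\otimes_k k[x_n](1_{p_n})$, the natural commutation of tensor products identifies these two $k$-modules (and $R$-modules). Then I would check that under this identification the transition morphism for $p\le q$ on the left, namely $\otimes_R$ of the inclusion $(\mathbf{x}^q)\hookrightarrow(\mathbf{x}^p)$, corresponds to the tensor product over $i$ of the transition morphisms of each $\pi_\stella M_i$; this is because the inclusion $(\mathbf{x}^q)\hookrightarrow(\mathbf{x}^p)$ factors as the tensor product over $i$ of the inclusions $(x_i^{p_i})\hookrightarrow(x_i^{q_i})$ in $k[x_i]$ (where $p_i\ge q_i$ componentwise means $q\subseteq p$, so we are only changing the coordinates where $p_i=\un$, $q_i=\0$), and $\Hom$ and $\otimes$ distribute over these tensor factorizations. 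This proves the equality of sheaves, and it is an isomorphism of $\ZZ^n$-graded sheaves in the graded case because each identification used is degree-preserving.

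\textbf{Main obstacle.} There is no deep obstacle here; the only thing requiring care is the bookkeeping in Step~2, namely verifying that the transition maps match under the canonical isomorphism of tensor products. The subtlety is purely that $\pi_\stella$ is built from $\Hom_R(-,M)$, which is contravariant, so the inclusions $(\mathbf{x}^p)\hookrightarrow(\mathbf{x}^q)$ (note the direction: $q\subseteq p$ gives $(\mathbf{x}^p)\subseteq(\mathbf{x}^q)$) get sent to maps going the correct way $(\pi_\stella M)_p\to(\pi_\stella M)_q$; one must keep the variance straight across all $n$ factors simultaneously. Once the identification $R(1_p)=\bigotimes_i k[x_i](1_{p_i})$ is fixed and one observes that $(\mathbf{x}^p)=\bigotimes_i(x_i^{p_i})$ as submodules of $R=\bigotimes_i k[x_i]$, compatibility of the inclusions and hence of the $\Hom$'s is automatic by functoriality of the tensor product, and the proof concludes.
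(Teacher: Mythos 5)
Your proposal is correct and follows essentially the same route as the paper: a stalkwise computation based on the factorization $({\mathbf x}^p)=(x_1^{p_1})\otimes_k\cdots\otimes_k(x_n^{p_n})$, identifying $[\pi_\stella(M_1\otimes_k\cdots\otimes_k M_n)]_p$ with $(\pi_\stella M_1)_{p_1}\otimes_k\cdots\otimes_k(\pi_\stella M_n)_{p_n}$. Your explicit check that the transition morphisms match (which the paper leaves implicit) is a welcome extra; just beware of the slight inconsistency in your last paragraph, where you momentarily reverse the roles of $p$ and $q$ relative to the convention $p\le q\Leftrightarrow p\subseteq q$ used earlier.
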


\begin{proof} Let $p=(p_1,\dots, p_n)\in\A^n_{\FF_1}$. Then $({\mathbf x}^p)=(x_1^{p_1})\otimes_k\cdots\otimes_k (x^{p_n})$, where $x_i^{p_i}=1$ if $p_i=\0$ and $x_i^{p_i}=x_i$ if $p_i=\un$. Then
\[\aligned & [\pi_\stella (M_1\otimes_k\cdots\otimes_kM_n)]_p = ( M_1\otimes_k\cdots\otimes_kM_n)\otimes_R R(1_p) \\ &= M_1(1_{p_1})\otimes_k\cdots\otimes_k M_n(1_{p_n}), \quad \text{with } 1_{p_i}=\left\{\aligned 0,\text{ if }p_i=\0 
\\ 1, \text{ if } p_i=\un\endaligned\right.
\\ & =  (\pi_\stella M_1)_{p_1} \otimes_k\cdots\otimes_k (\pi_\stella M_n)_{p_n} 
 =[ (\pi_\stella M_1)\boxtimes\cdots\boxtimes (\pi_\stella M_n)]_p.\endaligned\]
\end{proof}

Since $\pi^\stella$ and $\pi_\stella$ are exact functors, they induce functors in the derived categories
\[ \pi^\stella\colon D(\A^n_{\FF_1})\to D(R)\quad,\quad \pi_\stella\colon D(R)\to D(\A^n_{\FF_1})\] which are still adjoint.
}\end{cosa}

\section{Main theorem and applications}

For each $ M\in D(R)$, let us denote
\[ D( M)=\RR\Hom_R^\pun( M, \Omega^n_{R/k}).\]  Recall that for any $F\in D(\A^n_{\FF_1})$, $D(F)$ denotes the complex $\RR\HHom_{\A^n_{\FF_1}}^\pun(F,k_{\{\un\}})$. Then:

\begin{thm}\label{Hochster} For any $ M\in D (R)$, and any pointwise perfect $F\in D_{\text{\rm perf}}(\A^n_{\FF_1})$, one has
\[ \RR\Hom_R^\pun( M, \pi^\stella F)=\RR\Hom_{\A^n_{\FF_1}}^\pun(D(F),\pi_\stella D(M)).\]
\end{thm}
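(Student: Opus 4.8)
The plan is to establish the identity by reducing it to a manageable class of complexes $F$ via dévissage, and then to use the adjunction between $\pi^\stella$ and $\pi_\stella$ together with the duality formula \eqref{formulaD(F)}. First I would observe that both sides of the claimed equality are contravariant exact (i.e. triangulated) functors of $F\in D_{\text{\rm perf}}(\A^n_{\FF_1})$: the left side because $\pi^\stella$ is exact and $\RR\Hom_R^\pun(M,-)$ is triangulated, the right side because $D(-)$ sends triangles to triangles (this is part of the duality property cited from \cite{ST1}) and $\pi_\stella D(M)$ is a fixed object. Since every pointwise perfect complex is built by finitely many cones from the generators $E\otimes_k k_{U_p}$ with $E\in D(k)$ perfect and $p\in\A^n_{\FF_1}$ — indeed any $F\in D_{\text{\rm perf}}$ has a finite filtration whose graded pieces are shifts of such sheaves, using that $\A^n_{\FF_1}$ is a finite poset — it suffices to verify the formula when $F=E\otimes_k k_{U_p}$ for a perfect $E$. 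Here one must also record naturality in $F$, so that the isomorphism constructed on generators propagates along the cones; I would phrase this as: there is a natural transformation between the two functors that is an isomorphism on the generators, hence an isomorphism everywhere.

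Next I would compute both sides for $F=E\otimes_k k_{U_p}$. For the left side, $\pi^\stella(E\otimes_k k_{U_p})=E\otimes_k \pi^\stella(k_{U_p})=E\otimes_k(\mathbf{x}^p)=E\otimes_k R(-1_p)$, using the defining property of $\pi^\stella$ and its $k$-linearity; therefore $\RR\Hom_R^\pun(M,\pi^\stella F)=\RR\Hom_R^\pun(M,R(-1_p))\overset{\LL}\otimes_k E = D(M)(1_p)\overset{\LL}\otimes_k E$ up to the twist by $\Omega^n_{R/k}$, i.e. one needs to match $\RR\Hom_R^\pun(M,R(-1_p))$ with an appropriate shift/twist of $D(M)=\RR\Hom_R^\pun(M,\Omega^n_{R/k})$. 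Since $\Omega^n_{R/k}=({\mathbf x}^\un)=R(-1_\un)$, one has $R(-1_p)=\Omega^n_{R/k}(1_{\un}-1_p)=\Omega^n_{R/k}(1_{\widehat p})$, so the left side becomes $D(M)(1_{\widehat p})\overset{\LL}\otimes_k E$. For the right side, I would first compute $D(F)=D(E\otimes_k k_{U_p})$; from the duality property, $D(k_{U_p})$ is a shift of $k_{C_{\widehat p}}$ or similar — more precisely I expect $D(E\otimes_k k_{U_p})=E^\vee\otimes_k k_{C_{p}}[\,\text{something}\,]$, which one reads off by evaluating stalks and using $\RR\Hom_{\A^n_{\FF_1}}^\pun(k_{U_p},k_{\{\un\}})$. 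Then $\RR\Hom_{\A^n_{\FF_1}}^\pun(D(F),\pi_\stella D(M))=\RR\Hom_{\A^n_{\FF_1}}^\pun(E^\vee\otimes_k k_{C_p}[\ast],\pi_\stella D(M))$, and using $\RR\Hom_{\A^n_{\FF_1}}^\pun(k_{C_p},G)=\RR\Gamma_p(U_p,G)$-type formulae together with $E^{\vee\vee}=E$ (perfection of $E$), this collapses to $\RR\Gamma_p(U_p,\pi_\stella D(M))\overset{\LL}\otimes_k E$ up to shift, which by the stalk description $(\pi_\stella N)_p = N\otimes_R R(1_p)=N(1_p)$ and a computation of local cohomology on $U_p$ should again give $D(M)(1_{\widehat p})\overset{\LL}\otimes_k E$ with a matching shift.

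The main obstacle I anticipate is bookkeeping the shifts and $\ZZ^n$-twists so that the two sides match \emph{on the nose}, and more importantly producing the isomorphism \emph{naturally} in $F$ rather than just objectwise — the objectwise comparison on generators is essentially a formal manipulation, but to conclude by dévissage one needs a genuine natural transformation. I would construct this natural transformation abstractly: by adjunction $\RR\Hom_R^\pun(M,\pi^\stella F)=\RR\Hom_{\A^n_{\FF_1}}^\pun(\pi_\stella M? ,\dots)$ does not directly apply since the adjunction goes the other way, so instead I would use that $\pi^\stella$ has $\pi_\stella$ as right adjoint to write $\RR\Hom_R^\pun(M,\pi^\stella F)$ — hmm, this is $\RR\Hom$ \emph{into} $\pi^\stella F$, so I would rather dualize: apply $\RR\Hom_R^\pun(-,\Omega^n_{R/k})$ somewhere. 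Concretely, I expect the cleanest route is: $\RR\Hom_R^\pun(M,\pi^\stella F)\cong \RR\Hom_R^\pun(M, \RR\Hom_R^\pun(D(\pi^\stella F),\Omega^n_{R/k}))$ when $\pi^\stella F$ is suitably reflexive, then tensor-hom adjunction and the compatibility $D\circ\pi^\stella \cong \pi_\stella\circ D$ (which should itself follow from the generator computation and is perhaps already in \cite{ST1}) to land on the right side. Whichever route, the heart is the identity $D(\pi^\stella F)=\pi_\stella(D F)$ for pointwise perfect $F$, plus formula \eqref{formulaD(F)}; once that compatibility is in hand, the theorem follows by applying $\RR\Hom_R^\pun(M,-)$ and adjunction. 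I would therefore structure the write-up as: (1) prove $D\pi^\stella F\cong\pi_\stella DF$ by dévissage to $F=E\otimes_k k_{U_p}$; (2) deduce the theorem by a short adjunction argument.
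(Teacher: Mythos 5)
Your second, ``cleanest'' route is essentially the paper's proof. The paper first shows that $\pi^\stella F$ is a perfect complex of $R$-modules by exactly the d\'evissage you describe, using the standard resolution with terms $\bigoplus_{p_0<\cdots<p_i}F_{p_0}\otimes_k k_{U_{p_i}}$, so that only the generators $E\otimes_k k_{U_p}$ need checking, where $\pi^\stella(E\otimes_k k_{U_p})=E\otimes_k({\mathbf x}^p)$. It then uses biduality for perfect complexes with respect to the invertible module $\Omega^n_{R/k}$, namely $\RR\Hom_R^\pun(M,\pi^\stella F)=\RR\Hom_R^\pun(D(\pi^\stella F),D(M))$, invokes the compatibility of the two dualities with Yanagawa's functor (cited from \cite[Thm.~5.1]{ST1} rather than reproved), and concludes by the adjunction between $\pi^\stella$ and $\pi_\stella$.

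Two concrete corrections are needed, however. First, you state the key compatibility (twice) as $D\circ\pi^\stella\cong\pi_\stella\circ D$, and your step (1) is to prove $D(\pi^\stella F)\cong\pi_\stella D(F)$: this does not typecheck, since $D(F)$ is a complex of sheaves on $\A^n_{\FF_1}$ while $\pi_\stella$ takes $R$-modules as input, and $D(\pi^\stella F)$ lives in $D(R)$. The statement you actually need is $D(\pi^\stella F)\cong\pi^\stella D(F)$; the functor $\pi_\stella$ enters only afterwards, when adjunction converts $\RR\Hom_R^\pun(\pi^\stella D(F),D(M))$ into $\RR\Hom_{\A^n_{\FF_1}}^\pun(D(F),\pi_\stella D(M))$. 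As written, your step (1) cannot be carried out. Second, in your direct d\'evissage of the theorem itself, the naturality worry you raise is genuine --- an object-wise isomorphism on generators does not propagate along cones without a natural transformation defined beforehand --- and this is precisely why the paper uses d\'evissage only to establish perfectness of $\pi^\stella F$ and obtains every comparison map from biduality and adjunction, which are natural in $F$. In that part your guess for the dual of a generator is also off: a stalkwise computation gives $D(E\otimes_k k_{U_p})=E^\vee\otimes_k k_{U_{\widehat p}}$ with no shift (consistent with $D(({\mathbf x}^p))=({\mathbf x}^{\widehat p})$), not a shift of $E^\vee\otimes_k k_{C_p}$; with the correct formula both sides of the theorem on generators equal $D(M)(1_{\widehat p})\overset\LL\otimes_k E$, so no shift bookkeeping arises at all.
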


\begin{proof} Let us first see that $\pi^\stella F $ is a perfect complex of $R$-modules. Let us consider the standard resolution (see \cite{Sanchoetal})
\[ 0\to C_nF\to \cdots\to C_1 F \to C_0F\to F\to 0\] with $C_iF=\underset{p_0<\cdots <p_i}\bigoplus (F_{p_0})\otimes_k k_{U_{p_i}}$. If suffices to see that $\pi^\stella C_iF$ is perfect. Hence, we are reduced to prove that $\pi^\stella(E\otimes_k k_{U_p})$ is perfect, for any $E\in D(k)$ perfect and $p\in \A^n_{\FF_1}$. In this case,  $\pi^\stella (E\otimes_k k_{U_p})= E \otimes_k \pi^\stella ( k_{U_p}) = E \otimes_k ({\mathbf x}^p)$ is perfect.  

Now, since $\pi^\stella F $ is perfect and $\Omega^n_{R/k}$ is an invertible $R$-module, one has $$\RR\Hom_R^\pun( M,\pi^\stella F)= \RR\Hom_R^\pun(D(\pi^\stella F),D( M)).$$ 
Moreover, $D(\pi^\stella F)=\pi^\stella D(F)$ (see \cite[Thm 5.1]{ST1}). Then
\[   \RR\Hom_R^\pun( M,\pi^\stella F)= \RR\Hom_R^\pun ( \pi^\stella D( F),D( M)) 
  =  \RR\Hom_{\A^n_{\FF_1}}^\pun ( D( F), \pi_\stella D( M)).  \]
\end{proof}




\begin{rem} {\rm As we shall see, Hochster's formulae appear when one is able to decompose $\pi_\stella D(M)$, and this highly depends on of the choice of $M$. A  first example will be Theorem \ref{Hochster-injective}.}
\end{rem}

\begin{lem}\label{decomposition} Let $J$ be a sheaf of $k$-vector spaces on a finite poset $X$. If $J$ is injective, then it decomposes   as
\[ J\simeq \underset{x\in X}\bigoplus\, J(x)\otimes_kk_{C_x},\] with $J(x)=\Hom_X(k_{\{x\}},J)$.
\end{lem}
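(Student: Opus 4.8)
The plan is to work pointwise, using the standard description of sheaves on a finite poset as diagrams $(J_x, r_{xy})$. Since $k$ is a field, the stalk functor $F \mapsto F_x = \RR\Hom_X^\pun(k_{U_x},F)$ is exact, so everything happens in the abelian category of diagrams, and I only need to understand injectives there. The first step is to identify which sheaves are injective: I claim that for each $x\in X$ the skyscraper-type sheaf $k_{C_x}$ built from the closure of $x$ is injective, and more generally any $V\otimes_k k_{C_x}$ with $V$ a $k$-vector space is injective. This follows because $\Hom_X(-, V\otimes_k k_{C_x})$ can be computed as $\Hom_k((-)_x, V)$ up to the (exact, since $k$ is a field) stalk functor at $x$; concretely, $k_{C_x}$ represents the functor $F \mapsto \Hom_k(F_x,k)$-style duality, so it is injective because $k$ is injective over itself and $F\mapsto F_x$ is exact. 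Hence every direct sum $\bigoplus_{x} J(x)\otimes_k k_{C_x}$ is injective as well (a direct sum of injectives over a Noetherian-type indexing — here $X$ finite — is injective, and in any case $X$ is finite so it is a finite sum).

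\textbf{Key steps in order.} First I would set $J(x) := \Hom_X(k_{\{x\}}, J)$ for each $x \in X$ and construct a natural morphism $\varphi\colon \bigoplus_{x\in X} J(x)\otimes_k k_{C_x} \to J$. The map out of the summand $J(x)\otimes_k k_{C_x}$ is the adjoint of the inclusion $k_{\{x\}}\hookrightarrow k_{C_x}$ composed with the evaluation $J(x)\otimes_k k_{\{x\}} \to J$ coming from $J(x) = \Hom_X(k_{\{x\}},J)$; equivalently, on stalks at $y\le x$ it sends $v\in J(x)$ to the image of $v$ under $J(x)\to J_x \xrightarrow{r_{yx}^{-1}?}$— more precisely one uses that a section supported on $\{x\}$ of $J$ gives, for each $y\le x$, an element of $J_y$ by the structure maps, since $k_{C_x}$ has stalk $k$ at every $y\le x$. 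Second, I would show $\varphi$ is surjective: since $J$ is injective, for each $x$ the restriction map $J_{\0} \to \cdots$ — more usefully, since $J$ injective implies $J$ is "flasque in reverse", the map $\Gamma(U_x, J)\to J_x$ is surjective and there is a splitting, so every stalk is hit. Third, and this is the crux, I would show $\varphi$ is injective; this is where injectivity of $J$ is essential, to split off one closed point at a time.

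\textbf{Main obstacle and how to handle it.} The hard part is proving that $\varphi$ is an isomorphism, and the cleanest route is induction on $|X|$ using a closed point. Pick a closed point $x_0\in X$ (one with $C_{x_0}=\{x_0\}$, i.e. minimal). Then $k_{\{x_0\}}=k_{C_{x_0}}$ is injective and a direct summand of $J$: indeed the evaluation $J \to J(x_0)\otimes_k k_{\{x_0\}}$ — wait, the natural map goes $J(x_0)\otimes_k k_{\{x_0\}} = \Gamma_{x_0}(J)\hookrightarrow J$ and, $J$ being injective with $k_{\{x_0\}}$ injective, this mono splits, giving $J \simeq (J(x_0)\otimes_k k_{\{x_0\}}) \oplus J'$ where $J'$ is injective and supported on $X\setminus\{x_0\}$. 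Restricting to the open complement $U := X\setminus\{x_0\}$ (which is open since $\{x_0\}$ is closed) and applying the inductive hypothesis to $J'|_U$ — together with the check that $j_!$ (extension by zero from an open subset of a finite poset) preserves the relevant $k_{C_x}$'s and that $J'(x)$ is computed the same way on $U$ as on $X$ — completes the step. The genuinely delicate point is verifying that the splitting is compatible with the description of the other $J(x)$'s, i.e. that $J'(x)=J(x)$ for $x\ne x_0$ and that $(k_{C_x})|_U$ extended by zero is again of closure type in $U$; both are straightforward from the poset combinatorics but must be stated carefully. I expect the base case $|X|=1$ to be trivial and the whole argument to be short once the splitting lemma for injectives against skyscrapers is in place.
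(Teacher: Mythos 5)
Your overall strategy coincides with the paper's: induct on the number of points, split off the injective subsheaf $J(x_0)\otimes_k k_{\{x_0\}}$ at a closed point $x_0$, and your preliminary observation that $V\otimes_k k_{C_x}$ is injective (because $\Hom_X(F,V\otimes_k k_{C_x})=\Hom_k(F_x,V)$ and $F\mapsto F_x$ is exact over a field) is correct and even fills in a point the paper leaves implicit. However, the inductive step as you describe it contains a genuine error. The complement $J'\cong J/\bigl(J(x_0)\otimes_k k_{\{x_0\}}\bigr)$ is \emph{not} supported on $X\setminus\{x_0\}$, and it is \emph{not} recovered from $J'|_U$ by extension by zero $j_!$. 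Already for $X=\A^1_{\FF_1}=\{\0<\un\}$, $x_0=\0$ and $J=k$ the constant sheaf (which is injective, being $k_{C_{\un}}$), one has $J(\0)=\Hom_X(k_{\{\0\}},J)=0$, so $J'=J$ has nonzero stalk at $x_0$; and $j_!(J|_U)=k_{\{\un\}}\neq J$, while $j_*(J|_U)=J$. In general, the correct identification — obtained in the paper by applying $\HHom_X(\,\cdot\,,J)$ to $0\to k_U\to k\to k_{\{x_0\}}\to 0$ — is $J/\bigl(J(x_0)\otimes_k k_{\{x_0\}}\bigr)\cong j_*(J|_U)$, the direct image. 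With $j_!$ your reassembled sheaf would have stalk only $J(x_0)$ at $x_0$, whereas $J_{x_0}$ is in general larger, so the induction cannot close. Moreover, after replacing $j_!$ by $j_*$ you still must check that $j_*$ sends the summands of the inductive decomposition to the right sheaves, namely $j_*\bigl(k_{C_x\cap U}\bigr)=k_{C_x}$ for $x\neq x_0$: this holds because the stalk of this direct image at $x_0$ is the space of sections of the constant sheaf over $\{q: x_0<q\le x\}$, a poset with greatest element $x$, hence $k$ when $x_0<x$ and $0$ otherwise. (The check $J|_U(x)=J(x)$ for $x\neq x_0$ is fine, since $U_x\subseteq U$.)

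A secondary inaccuracy: in your first construction there is no canonical morphism $J(x)\otimes_k k_{C_x}\to J$; the restriction maps of $J$ go from $J_y$ to $J_x$ for $y\le x$, not the other way, so a section supported at $x$ does not give elements of the stalks at smaller points "by the structure maps". The canonical map is $J(x)\otimes_k k_{\{x\}}\to J$, and extending it over $k_{\{x\}}\hookrightarrow k_{C_x}$ requires the injectivity of $J$ and is non-canonical (consistent with the lemma asserting only a non-canonical isomorphism). This part is superseded by the inductive argument, but as written it would not stand on its own.
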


\begin{proof} We proceed by induction on $r=$ number of points of $X$. If  $r=1$, it is immediate. Assume  $r>1$, let $x_0\in X$ be a closed point and $U:=X\negmedspace-\negmedspace\{x_0\}\overset j\hookrightarrow X$. Taking $\HHom_X(\quad,J)$ in the exact sequence $0\to k_U\to k\to k_{\{x_0\}}\to 0$ (where $\HHom_X$ is the sheaf of homomorphisms), we obtain
\[ 0\to J(x_0)\otimes_k k_{\{x_0\}}\to J\to j_*J_{\vert U}\to 0.\] Since $J(x_0)\otimes_k k_{\{x_0\}}$ is injective, this sequence splits, so $J\simeq J(x_0)\otimes_k k_{\{x_0\}}\oplus j_*J_{\vert U}$. One concludes by induction because $J_{\vert U}$ is an injective sheaf on $U$.
\end{proof}
\begin{thm}\label{Hochster-injective} Assume that $k$ is a field.  Let $ M\in D (R)$ such that $D( M)= I[r]$ for some injective $R$-module $ I$ and some integer $r$. Then, for any pointwise perfect $F\in D_{\text{\rm perf}}(\A^n_{\FF_1})$ one has a  (non-canonical)  decomposition
\[ \Ext_R^i( M,  \pi^\stella F)\simeq \underset{p\in\overline{\supp(F)}}\bigoplus\, H^{i+n+r-c_p}_p(U_p,F)\otimes_k \Hom_R(R_{ C_p },  I)(1_p),\qquad c_p=\dim C_p.\]  
If $M$ is $\ZZ^n$-graded, this decomposition is an isomorphism of $\ZZ^n$-graded $k$-vector spaces.
\end{thm}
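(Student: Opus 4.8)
The plan is to substitute the hypothesis $D(M)=I[r]$ into Theorem~\ref{Hochster}, decompose $\pi_\stella I$ by means of Lemma~\ref{decomposition}, and evaluate each resulting summand with the duality formula \eqref{formulaD(F)}.

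First I would apply Theorem~\ref{Hochster}, getting $\RR\Hom_R^\pun(M,\pi^\stella F)=\RR\Hom_{\A^n_{\FF_1}}^\pun(D(F),\pi_\stella D(M))$, and, since $\pi_\stella$ is exact and $D(M)=I[r]$, rewrite the right-hand side as $\RR\Hom_{\A^n_{\FF_1}}^\pun(D(F),(\pi_\stella I)[r])$. The crucial structural remark is that $\pi_\stella I$ is an \emph{injective} sheaf of $k$-vector spaces: $\pi^\stella$ is exact and $\pi_\stella$ is its right adjoint, so $\pi_\stella$ sends injective $R$-modules to injective objects of $\Shv_k(\A^n_{\FF_1})$, which, as $k$ is a field, is the category of sheaves of $k$-vector spaces. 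Lemma~\ref{decomposition} then gives $\pi_\stella I\simeq\bigoplus_{p}(\pi_\stella I)(p)\otimes_k k_{C_p}$ with $(\pi_\stella I)(p)=\Hom_{\A^n_{\FF_1}}(k_{\{p\}},\pi_\stella I)$.

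Next I would compute the coefficient spaces. By adjunction $(\pi_\stella I)(p)=\Hom_R(\pi^\stella k_{\{p\}},I)$, so I need $\pi^\stella k_{\{p\}}$. From the exact sequence $0\to K_p\to k_{U_p}\to k_{\{p\}}\to 0$, where $K_p=\sum_{i\notin p}k_{U_{p\cup\{i\}}}$ as a subsheaf of $k_{U_p}$ (both $K_p$ and the kernel have stalk $k$ at every $q\supsetneq p$ and $0$ elsewhere), and using that $\pi^\stella$ is exact and commutes with colimits together with $\pi^\stella k_{U_q}=({\mathbf x}^q)$, I obtain $\pi^\stella k_{\{p\}}=({\mathbf x}^p)\big/\sum_{i\notin p}({\mathbf x}^{p\cup\{i\}})=({\mathbf x}^p)\big/{\mathbf x}^p(x_i:i\notin p)\cong R_{C_p}(-1_p)$, since $({\mathbf x}^p)=R(-1_p)$ and $(x_i:i\notin p)=I_{C_p}$. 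Hence $(\pi_\stella I)(p)\cong\Hom_R(R_{C_p},I)(1_p)$. I expect this small computation of $\pi^\stella k_{\{p\}}$ to be the only step needing genuine care; the rest is formal bookkeeping with the cited results.

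Finally I would substitute the decomposition of $\pi_\stella I$ back in, pull the (finite) direct sum out of $\RR\Hom_{\A^n_{\FF_1}}^\pun(D(F),-)$, and apply \eqref{formulaD(F)} with $E=\Hom_R(R_{C_p},I)(1_p)$, which is a $k$-vector space so that $\overset\LL\otimes_k=\otimes_k$, obtaining
\[\RR\Hom_R^\pun(M,\pi^\stella F)=\bigoplus_{p\in\A^n_{\FF_1}}\RR\Gamma_p(U_p,F)\otimes_k\Hom_R(R_{C_p},I)(1_p)\,[n+r-c_p].\]
Taking $i$-th cohomology and discarding the summands with $p\notin\overline{\supp(F)}$ — for which $F_{\vert U_p}=0$, hence $\RR\Gamma_p(U_p,F)=0$ — yields the asserted isomorphism. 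For the last clause, when $M$ is $\ZZ^n$-graded I would run the entire argument inside the $\ZZ^n$-graded categories: $\pi^\stella$ and $\pi_\stella$ restrict there to an exact adjoint pair, Lemma~\ref{decomposition} holds verbatim (all the morphisms and the splitting are homogeneous), and \eqref{formulaD(F)} is homogeneous, so the isomorphism obtained is one of $\ZZ^n$-graded $k$-vector spaces.
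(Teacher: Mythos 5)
Your proposal is correct and follows essentially the same route as the paper: apply Theorem~\ref{Hochster}, note that $\pi_\stella I$ is injective (right adjoint of the exact $\pi^\stella$), decompose it via Lemma~\ref{decomposition}, identify the coefficients as $\Hom_R(R_{C_p},I)(1_p)$ through the computation $\pi^\stella k_{\{p\}}=R_{C_p}(-1_p)$, and finish with formula~\eqref{formulaD(F)}. Your use of a short exact sequence with the subsheaf $\sum_{i\notin p}k_{U_{p\cup\{i\}}}$ instead of the paper's right-exact presentation $\bigoplus_{i\notin p}k_{U_{p\cup\{i\}}}\to k_{U_p}\to k_{\{p\}}\to 0$ is only a cosmetic variation.
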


\begin{proof}

Since $\pi^\stella$ is exact, $J:=\pi_\stella I$ is an injective sheaf on $\A^n_{\FF_1}$, so it decomposes (by Lemma \ref{decomposition})
\[ J\simeq  \underset{p\in\A^n_{\FF_1}}\bigoplus\, J(p)\otimes_k k_{C_p}\] with $J(p)=H^0_p(U_p,J)=\Hom_{\A^n_{\FF_1}}(k_{\{p\}}, J)$. Then
\[\aligned  &\RR\Hom_R^\pun(\M,\pi^\stella F)\overset{\ref{Hochster}}=\RR\Hom_{\A^n_{\FF_1}}(D(F),\pi_\stella D( M))  = \RR\Hom_{\A^n_{\FF_1}}(D(F),  \pi_\stella  I)[r] 
\\ & \simeq \underset{p\in\A^n_{\FF_1}}\bigoplus\,  \RR\Hom_{\A^n_{\FF_1}}^\pun(D(F), J(p)\otimes_k k_{C_p})[r].\endaligned\]
Now, since $k$ is a field, $\overset\LL\otimes_k=\otimes_k$, and then $ \RR\Hom_{\A^n_{\FF_1}}^\pun(D(F), J(p)\otimes_k k_{C_p})\overset{\ref{formulaD(F)}}=  \RR\Gamma_p(U_p,F)[n-c_p]\otimes_k J(p) $, which is zero if $p\notin \overline{\supp(F)}$. To conclude, it suffices to see that $J(p)= \Hom_R(R_{C_p}, I)(1_p)$. By definition
\[ J(p)=\Hom_{\A^n_{\FF_1}}(k_{\{p\}},J)=\Hom_R(\pi^\stella k_{\{p\}},  I).\]
Applying $\pi^\stella$ to the exact sequence 
\[ \underset{i\notin p} \bigoplus \, k_{U_{p\cup {\{i\}}}} \to k_{U_p}\to k_{\{p\}}\to 0 \]  one obtains that $\pi^\stella k_{\{p\}}= R_{ C_p }(-1_p)$, and then $J(p)=\Hom_R(R_{ C_p },  I)(1_p)$.
\end{proof}

\begin{prop}\label{residuo-descomp} Let $R=k[x_1,\dots,x_n]$, $\mm=(x_1,\dots ,x_n)$ and  $\mm_l =(x_1^l,\dots ,x_n^l)$. One has
\begin{enumerate} \item[(a)] 
$ \pi_\stella H^n_\mm(\Omega^n_{R/k}) = \underset{p\in\A^n_{\FF_1}}\bigoplus\, H^{c_p}_\mm(R_{C_p}) \otimes_k  k_{C_p}, \text{ with } c_p=\dim C_p.$
\item[(b)]  $\pi_\stella \Hom_R(R/\mm_l, H^n_\mm(\Omega^n_{R/k})) =  \underset{q\geq p\in\A^n_{\FF_1}}\bigoplus\, (R_{C_p}/\mm_{l-1}) \otimes_k  k_{C_q}\otimes_k k_{U_{q-p}}  ( l_q).$ 
\end{enumerate}
\end{prop}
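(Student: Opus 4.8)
The strategy is to reduce both statements to the case $n=1$ through the additivity of $\pi_\stella$ (Proposition \ref{product}), after rewriting $H^n_\mm(\Omega^n_{R/k})$ and $\Hom_R(R/\mm_l,H^n_\mm(\Omega^n_{R/k}))$ as external $k$-tensor products of one-variable modules. Concretely, using $R=k[x_1]\otimes_k\cdots\otimes_k k[x_n]$, $\mm=(x_1)+\cdots+(x_n)$ and $\Omega^n_{R/k}=\Omega^1_{k[x_1]/k}\otimes_k\cdots\otimes_k\Omega^1_{k[x_n]/k}$, I would note that the \v{C}ech complex $\check{C}^\pun(x_1,\dots,x_n;\Omega^n_{R/k})$ computing $H^\pun_\mm(\Omega^n_{R/k})$ splits, as a complex of $R$-modules, into the $k$-tensor product $\bigotimes_{i=1}^n\check{C}^\pun(x_i;\Omega^1_{k[x_i]/k})$, since localizing $R$ at $x_i$ affects only the $i$-th tensor factor. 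Each factor $\bigl(\Omega^1_{k[x_i]/k}\to(\Omega^1_{k[x_i]/k})_{x_i}\bigr)$ is a two-term complex of free $k$-modules whose only cohomology is $E_i:=H^1_{(x_i)}(\Omega^1_{k[x_i]/k})$, in degree $1$ (the degree-$0$ cohomology vanishes because $x_i$ is a nonzerodivisor on $k[x_i]$), and $E_i$ is again $k$-free; so the K\"unneth formula gives $H^n_\mm(\Omega^n_{R/k})=E_1\otimes_k\cdots\otimes_k E_n$ as $R$-modules. For (b), $R/\mm_l=\bigotimes_i k[x_i]/(x_i^l)$ and $\Hom_R(R/\mm_l,-)$ is the $\mm_l$-torsion functor, so $k$-flatness of the $E_i$ yields $\Hom_R(R/\mm_l,H^n_\mm(\Omega^n_{R/k}))=\bigotimes_{i=1}^n(0:_{E_i}x_i^l)=\bigotimes_{i=1}^n\Hom_{k[x_i]}(k[x_i]/(x_i^l),E_i)$.

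Next I would run the one-variable computation. For any $k[x]$-module $M$, the sheaf $\pi_\stella M$ on $\A^1_{\FF_1}=\{\0<\un\}$ is the pair $\bigl(M\xrightarrow{\cdot x}M(1)\bigr)$: stalk $M$ at $\0$, stalk $\Hom_{k[x]}((x),M)\cong M(1)$ at $\un$, transition multiplication by $x$. If $\ker(\cdot x)$, $xM$ and $M/xM$ are projective $k$-modules, the sequences $0\to\ker(\cdot x)\to M\to xM\to 0$ and $0\to xM\to M\to M/xM\to 0$ split over $k$, hence $\pi_\stella M\cong(\ker(\cdot x))_{\{\0\}}\oplus\underline{xM}\oplus(M/xM)_{\{\un\}}$, a sum of two skyscrapers and the constant sheaf $\underline{xM}$. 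Taking $M=E_i$ (here $\cdot x_i$ is surjective with kernel $\cong k$, and $E_i/x_iE_i=0$) gives $\pi_\stella E_i\cong k_{\{\0\}}\oplus\underline{E_i}=(H^0_{(x_i)}(R_{C_\0})\otimes_k k_{C_\0})\oplus(H^1_{(x_i)}(R_{C_\un})\otimes_k k_{C_\un})$, since $R_{C_\0}=k$ is annihilated by $(x_i)$ and $R_{C_\un}=k[x_i]$ with $\Omega^1_{k[x_i]/k}\cong k[x_i]$, while $k_{\{\0\}}=k_{C_\0}$ and $\underline k=k_{C_\un}$; this is the $n=1$ case of (a). Taking $M=L_i:=(0:_{E_i}x_i^l)\cong k[x_i]/(x_i^l)$ (as $k[x_i]$-module, up to a degree shift), for which $\ker(\cdot x_i)$ is the socle $\cong k$, $x_iL_i\cong k[x_i]/(x_i^{l-1})$ is $k$-free of rank $l-1$, and $L_i/x_iL_i\cong k$, gives $\pi_\stella L_i\cong k_{\{\0\}}\oplus\underline{k[x_i]/(x_i^{l-1})}\oplus k_{\{\un\}}$; writing $k=R_{C_\0}/\mm_{l-1}R_{C_\0}$, $k[x_i]/(x_i^{l-1})=R_{C_\un}/\mm_{l-1}R_{C_\un}$ and $k_{\{\0\}}=k_{C_\0}\otimes_k k_{U_\0}$, $\underline k=k_{C_\un}\otimes_k k_{U_\0}$, $k_{\{\un\}}=k_{C_\un}\otimes_k k_{U_\un}$ identifies this with the $n=1$ case of (b), indexed by the three pairs $q\geq p$ in $\A^1_{\FF_1}$.

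Finally I would reassemble: box-multiplying the one-variable decompositions by Proposition \ref{product}, and using $k_{C_{p_1}}\boxtimes\cdots\boxtimes k_{C_{p_n}}=k_{C_p}$, $k_{U_{p_1}}\boxtimes\cdots\boxtimes k_{U_{p_n}}=k_{U_p}$ and $R_{C_{p_1}}\otimes_k\cdots\otimes_k R_{C_{p_n}}=R_{C_p}$ for $p=(p_1,\dots,p_n)$, gives (a) at once (the K\"unneth argument above, applied to $R_{C_p}=\bigotimes_{i\in p}k[x_i]$, also provides $\bigotimes_i H^{c_{p_i}}_{(x_i)}(R_{C_{p_i}})=H^{c_p}_\mm(R_{C_p})$). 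For (b), choosing in each coordinate one of the three cells $(p_i,q_i)\in\{(\0,\0),(\0,\un),(\un,\un)\}$ amounts to choosing a pair $p\leq q$ in $\A^n_{\FF_1}$, and a short check shows the sheaf factor of the corresponding summand is $k_{C_q}\otimes_k k_{U_{q-p}}$ (a coordinate contributes $k_{U_\un}$ exactly when $p_i<q_i$, i.e.\ when $i\in q-p$), the coefficient module is $\bigotimes_{i\in p}k[x_i]/(x_i^{l-1})=R_{C_p}/\mm_{l-1}R_{C_p}$, and the total degree shift is $l_q$. I expect the main obstacle to be precisely this $\ZZ^n$-grading bookkeeping — the twist $l_q$ in (b), the shift hidden in $L_i\cong k[x_i]/(x_i^l)$, and the implicit twist coming from $\Omega^n_{R/k}=R(-1_\un)$ in (a) — and, to a lesser extent, keeping the K\"unneth step valid over an arbitrary ring $k$, for which I work throughout with the honest \v{C}ech complexes of free $k$-modules rather than a spectral sequence.
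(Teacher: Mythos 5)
Your proposal is correct and follows essentially the same route as the paper: reduce to the case $n=1$ via the additivity of $\pi_\stella$ (Proposition \ref{product}) and split the two-term one-variable sheaf $\bigl(M\xrightarrow{\cdot x}M(1)\bigr)$ into skyscrapers at $\0$ and $\un$ plus a constant sheaf, then box-multiply. The only real difference is cosmetic: the paper constructs the $n=1$ splittings canonically via the residue map (a choice it reuses later in Remarks \ref{residuo} to describe the module structure), whereas you obtain them from chosen homogeneous $k$-linear splittings of the $k$-projective subquotients $\ker(\cdot x)$, $xM$, $M/xM$; on the other hand you spell out, via \v{C}ech complexes, the K\"unneth factorizations $H^n_\mm(\Omega^n_{R/k})=\bigotimes_i H^1_{(x_i)}(\Omega^1_{k[x_i]/k})$ and its $\mm_l$-torsion analogue, which the paper only asserts, while leaving the $\ZZ^n$-grading twists ($l_q$, and the shift from $\Omega^n_{R/k}=R(-1_\un)$) as acknowledged bookkeeping that the paper carries out explicitly.
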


\begin{proof} For $n=1$ it is a direct computation. The general case follows from the case $n=1$ and the additivity of $\pi_\stella$ (Proposition \ref{product}). Let us give the details.

 (a). Let us first consider the case $n=1$. In this case,   $$R=k[x] , \quad \mm=(x) ,\quad  I:=H^1_\mm(\Omega_{k[x]/k})= k[x^{-1}]\frac{\di x}x.$$ 
One has a canonical isomorphism $$  k  =\Hom_R(R/\mm,I),\quad  1 \mapsto \frac{\di x}x $$ and the natural inclusion $k\hookrightarrow I$ has a retract, the residue map:
\[ \aligned \Res_0\colon I&\longrightarrow k \\ \omega &\mapsto \Res_0(\omega).\endaligned\]

Now, $\pi_\stella I$ is the sheaf on $\A^1_{\FF_1}$ given by
\[ (\pi_\stella I)_\0 = I,\quad (\pi_\stella I)_\un= \Hom_R((x),I)=I(1)\] and the morphism $I=(\pi_\stella I)_\0
\to (\pi_\stella I)_\un=I(1)$ is the multiplication by $x$. Taking $\HHom_{\A^n_{\FF_1}}(\quad,\pi_\stella I)$ in the exact sequence of sheaves on $\A^1_{\FF_1}$ \[ 0\to k_{\{\un\}}\to k\to k_{\{\0\}}\to 0\] we obtain
the exact sequence
\begin{equation}\label{pre-split} 0\to k_{\{\0\}}  \to \pi_\stella I\to I(1) \to 0\end{equation} where $I(1) $ is the constant sheaf $I(1)$ on $\A^1_{\FF_1}$; indeed, let us first see that $\HHom_{\A^n_{\FF_1}}(k_{\{\0\}},\pi_\stella I)=k_{\{\0\}}$. Since $\HHom_{\A^n_{\FF_1}}(k_{\{\0\}},\pi_\stella I)$ is supported at the closed point $\0$, one has $\HHom_{\A^n_{\FF_1}}(k_{\{\0\}},\pi_\stella I)=M\otimes_k k_{\{\0\}}$ for some $k$-module $M$. Taking the stalk at $\0$ (i.e., global sections), one obtains $M=\Hom_{\A^n_{\FF_1}}(k_{\{\0\}},\pi_\stella I)=\Hom_R(\pi^\stella k_{\{\0\}}, I) = \Hom_R(R/\mm, I) =k$. Now, let us see that $\HHom_{\A^n_{\FF_1}}(k_{\{\un\}},\pi_\stella I) = I(1)$. Let us denote $j\colon \{\un\}\hookrightarrow \A^1_{\FF_1}$. Then
\[  \HHom_{\A^n_{\FF_1}}(k_{\{\un\}},\pi_\stella I)  = j_*j^{-1}\pi_\stella I = j_*(\pi_\stella I)_\un = j_*I(1)= I(1).\]
The sequence \eqref{pre-split} splits canonically, since a retract of $k_{\{\0\}}  \to \pi_\stella I$ is given by the morphism $\pi_\stella I \to  k_{\{\0\}} $ corresponding to the residue map $\Res_0\colon I\to k$ under the equality
\[ \aligned \Hom_{\A^1_{\FF_1}}(\pi_\stella I, k_{\{\0\}}) &= \Hom_k(I,k) \\ f&\mapsto f_\0=\Gamma(\A^1_{\FF_1},f)\endaligned.\] Also, the section of $\pi_\stella I\to I(1) $ is the morphism $I(1)  \to  \pi_\stella I$ corresponding to the morphism $I(1)\overset{\cdot 1/x}\to I$ under the equality \[ \aligned \Hom_{\A^1_{\FF_1}}(I(1)  ,\pi_\stella I) &= \Hom_k(I(1),I) \\ f&\mapsto f_\0=\Gamma(\A^1_{\FF_1},f)\endaligned.\]
Since the sequence splits, one has: $\pi_\stella I = k_{\{\0\}}\oplus I(1)$ and (a) is proved for  $n=1$ (notice that $I(1)=H^1_\mm(R)$, since $\Omega_{R/k}(1)=R$).

The general case of (a) follows from the case $n=1$ by additivity, as follows: now, $R=k[x_1,\dots,x_n]$ and one has an isomorphism 
\[ H^n_\mm(\Omega^n_{R/k})= H^1_{(x_1)}(\Omega_{k[x_1]/k})\otimes_k \cdots \otimes_k H^1_{(x_n)}(\Omega_{k[x_n]/k}))\] and then (Proposition \ref{product})
\[ \pi_\stella H^n_\mm(\Omega^n_{R/k}) = (k_{\{\0\}}\oplus H^1_{(x_1)}(k[x_1]))\boxtimes\cdots \boxtimes (k_{\{\0\}}\oplus H^1_{(x_n)}(k[x_n])).\]  This yields the result, taking into account that
\[ k_{C_p}=\underset{i\notin p}\bigotimes\, k_{H_i} = \underset{i\notin p}\bigotimes\, \phi_i^{-1}k_{\{\0\}},\qquad \phi_i\colon\A^n_{\FF_1}\to\A^1_{\FF_1} \text{ the } i\text{-th projection,}\] and
\[ H^{c_p}_\mm(R_{C_p})= \underset{i\in p}\bigotimes\, H^1_{(x_i)}(k[x_i]).\]

(b). Again, let us first consider the case $n=1$. In this case, $R=k[x]$, $\mm_l=(x^l)$ and we denote $$I_l:=\Hom_R(R/\mm_l, H^1_\mm(\Omega_{R/k}))=(R/\mm_l)^*=k\frac{\di x}x\oplus\cdots\oplus k\frac{\di x}{x^l}.$$  The sheaf $\pi_\stella I_l$ on $\A^1_{\FF_1}$ is described by
\[ (\pi_\stella I_l)_\0=I_l,\quad (\pi_\stella I_l)_\un=I_l(1),\quad I_l=(\pi_\stella I)_\0\overset{\cdot x}\to  (\pi_\stella I)_\un=I_l(1).\] Let us see that
\[ \pi_\stella I_l= k_{\{\0\}}\oplus k_{\{\un\}}(l)\oplus (R/\mm_{l-1})(l).\]
 Taking $\HHom_{\A^n_{\FF_1}}(\quad,\pi_\stella I_l)$ in the epimorphism $k\to k_{\{\0\}}$ one obtains (as in the proof of \ref{residuo-descomp}) $0\to k_{\{\0\}}\to \pi_\stella I_l$, which has a retract (again, given by the residue map), and then $\pi_\stella I_l= k_{\{\0\}}\oplus C$, with $C=\pi_\stella I_l/k_{\{\0\}}$. The sheaf $C$ is described as
 \[ C_\0=I_l/I_1=I_{l-1}(1), \quad C_\un=I_l(1)\] and the morphism $C_\0\overset{\cdot x}\to C_\un$ is injective, with cokernel $I_l(1)/x\cdot I_{l-1}=I_1(l)=k(l)$. Hence, we have an exact sequence
 \[ 0\to I_{l-1}(1)\to C \to k_{\{\un\}} (l)\to 0\] which splits, because one has a section $k_{\{\un\}} (l) \to C$, induced by the morphism $k(l)\to C_\un$, $1\mapsto (\di x)/x^l$. This ends the proof of (b) for $n=1$, since $I_{l-1}(1)=(R/\mm_{l-1})(l)$. As before, the general case is obtained by additivity, as follows.
 
 Now, $R=k[x_1,\dots ,x_n]$. Then
 \[  (R/\mm_l)^*=  (k[x_1]/(x_1^l))^*\otimes_k\cdots\otimes_k  (k[x_n]/(x_n^l))^*\] hence,
 \[\aligned  \pi_\stella (R/\mm_l)^* &= \bigotimes_{i=1}^n\, \phi_i^{-1} [k_{\{\0\}}\oplus k_{\{\un\}}(l )\oplus (k[x_i]/(x_i^{l-1})  (l)],\quad \phi_i\colon \A^n_{\FF_1}\to \A^1_{\FF_1} \text { the } i\text{-th projection},
 \\ & = \underset{p\in\A^n_{\FF_1}}\bigoplus \, \left[ \underset{i\in \widehat p}\bigotimes\, (k_{H_i}\oplus k_{U_i}(l ))\, 
 \underset{i\in   p}\bigotimes\, (k[x_i]/(x_i^{l-1}))(l)\right]. 
 \endaligned\]
 Now, $$\underset{i\in   p}\bigotimes\, (k[x_i]/(x_i^{l-1}))(l)= (R_{C_p}/\mm_{l-1})(l_p)$$ and
 \[ \aligned \underset{i\in \widehat p}\bigotimes\, (k_{H_i}\oplus k_{U_i}(l ))& =\underset{q\leq \widehat p}\bigoplus \,  \left[ \underset{i\in q}\bigotimes\, k_{H_i} \otimes_k  \underset{i\in \widehat p-q}\bigotimes\, k_{U_i}(l )\right] =  \underset{q\leq \widehat p} \bigoplus \, k_{C_{\widehat q}}\otimes_k k_{U_{\widehat p -q}}(l_{\widehat p-q})
 \\ & =  \underset{q\geq  p}\bigoplus \, k_{C_{q}}\otimes_k k_{U_{q-p}}(l_{ q-p})
\endaligned. \] This ends the proof of (b).
 \end{proof}

If $k$ is a field and $F$ is a sheaf of finite dimensional $k$-vector spaces on $\A^n_{\FF_1}$, Yanagawa computes  (in \cite[Thm. 3.3, Thm. 3.5]{Ya03} ) the degree-wise components of $H^i_\mm(\pi^\stella F)$   as   cohomology groups with compact support of a certain sheaf $(\pi^\stella F)^+$ on the geometric realization of $\A^n_{\FF_1}$. The next theorem generalizes Yanagawas's results for any ring $k$ and any pointwise perfect complex $F$ on $\A^n_{\FF_1}$, but   computing $H^i_\mm(\pi^\stella F)$ in terms of local cohomology groups of $F$ (that is, we do not go through the geometric realization). See also \cite{BBR}.

 \begin{thm} \label{cor2} For any pointwise perfect $F\in D_{\text{\rm perf}}(\A^n_{\FF_1})$, one has a canonical decomposition of $\ZZ^n$-graded $k$-modules:
\[ H^i_\mm (\pi^\stella F) = \underset{p\in \overline{\supp(F)}}\bigoplus\, H^{i-c_p}_p(U_p,F)\otimes_k H^{c_p}_\mm(R_{C_p}), \quad c_p=\dim C_p.  \] Taking $F=k_K$, with $K$ a closed subset of $\A^n_{\FF_1}$, one obtains Hochster's formula
\[ H^i_\mm (R_K) = \underset{p\in K}\bigoplus\,  H^{i-c_p-1}_{\text{\rm red}}(\Link_K(p),k)\otimes_k  H^{c_p}_\mm(R_{C_p}). \]

\end{thm}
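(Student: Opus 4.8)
The plan is to derive Theorem \ref{cor2} as an immediate consequence of the main theorem together with the decomposition of $\pi_\stella H^n_\mm(\Omega^n_{R/k})$ obtained in Proposition \ref{residuo-descomp}(a). The starting point is the observation that local cohomology $H^i_\mm(-)$ is, up to a shift, an $\Ext$ against the injective hull / the top local cohomology module: more precisely, there is a canonical identification $D(M) = \RR\Hom_R^\pun(M,\Omega^n_{R/k})$ and, by local duality over the regular ring $R$, $\RR\Gamma_\mm(M)$ is dual to $D(M)$; concretely one takes $M$ to be chosen so that $D(M) = H^n_\mm(\Omega^n_{R/k})[?]$, i.e. $M = R$ works since $\RR\Gamma_\mm(\Omega^n_{R/k}) = H^n_\mm(\Omega^n_{R/k})[-n]$ and $\Omega^n_{R/k}$ is invertible. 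So first I would record that for any complex of $R$-modules $N$, $\RR\Gamma_\mm(N) \cong \RR\Hom_R^\pun(D(N),H^n_\mm(\Omega^n_{R/k}))[-n]$, which for $N = \pi^\stella F$ gives
\[
\RR\Gamma_\mm(\pi^\stella F) = \RR\Hom_R^\pun\bigl(D(\pi^\stella F), H^n_\mm(\Omega^n_{R/k})\bigr)[-n].
\]

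Next I would invoke $D(\pi^\stella F) = \pi^\stella D(F)$ (cited from \cite[Thm.~5.1]{ST1}, and already used in the proof of Theorem \ref{Hochster}), so the right-hand side becomes $\RR\Hom_R^\pun(\pi^\stella D(F), H^n_\mm(\Omega^n_{R/k}))[-n]$, and then apply adjunction between $\pi^\stella$ and $\pi_\stella$ to rewrite it as $\RR\Hom_{\A^n_{\FF_1}}^\pun(D(F),\pi_\stella H^n_\mm(\Omega^n_{R/k}))[-n]$. Now I would substitute the decomposition of Proposition \ref{residuo-descomp}(a),
\[
\pi_\stella H^n_\mm(\Omega^n_{R/k}) = \underset{p\in\A^n_{\FF_1}}\bigoplus\, H^{c_p}_\mm(R_{C_p})\otimes_k k_{C_p},
\]
so that, using that $\RR\Hom^\pun$ turns the finite direct sum in the second variable into a direct sum and that $D(F)$ is pointwise perfect (Theorem \ref{Hochster}'s proof shows $\pi^\stella F$ is perfect, and \ref{pointwise-perfect} shows $D(F)$ is pointwise perfect), I get
\[
\RR\Gamma_\mm(\pi^\stella F) = \underset{p\in\A^n_{\FF_1}}\bigoplus\, \RR\Hom_{\A^n_{\FF_1}}^\pun\bigl(D(F), H^{c_p}_\mm(R_{C_p})\otimes_k k_{C_p}\bigr)[-n].
\]
Then formula \eqref{formulaD(F)} with $E = H^{c_p}_\mm(R_{C_p})$ evaluates each summand as $\RR\Gamma_p(U_p,F)\overset\LL\otimes_k H^{c_p}_\mm(R_{C_p})\,[n-c_p][-n] = \RR\Gamma_p(U_p,F)\overset\LL\otimes_k H^{c_p}_\mm(R_{C_p})\,[-c_p]$, which vanishes for $p\notin\overline{\supp(F)}$ since then $F_{\vert U_p} = 0$. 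Finally I would note $H^{c_p}_\mm(R_{C_p})$ is a flat (in fact free, up to an infinite direct sum) $k$-module — it is $H^{c_p}$ of the Čech complex of $R_{C_p}$, and since $R_{C_p}$ is a polynomial ring over $k$ this local cohomology module is $k$-free — so $\overset\LL\otimes_k$ is an honest $\otimes_k$ with no higher Tor, and taking $i$-th cohomology yields exactly
\[
H^i_\mm(\pi^\stella F) = \underset{p\in\overline{\supp(F)}}\bigoplus\, H^{i-c_p}_p(U_p,F)\otimes_k H^{c_p}_\mm(R_{C_p}).
\]
The $\ZZ^n$-graded refinement follows because every functor used ($\RR\Gamma_\mm$, $\pi^\stella$, $\pi_\stella$, $D$, the decomposition \ref{residuo-descomp}(a), and \eqref{formulaD(F)}) is compatible with the $\ZZ^n$-grading.

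For the second, classical Hochster formula I would specialize $F = k_K$ with $K\subseteq\A^n_{\FF_1}$ a simplicial complex (closed subset). Then $\pi^\stella k_K = R_K$, and the general formula reads $H^i_\mm(R_K) = \bigoplus_{p\in\overline{\supp(k_K)}} H^{i-c_p}_p(U_p,k_K)\otimes_k H^{c_p}_\mm(R_{C_p})$. Since $k_K$ is the constant sheaf on the closed set $K$, its support closure is $K$ itself, and $H^j_p(U_p,k_K) = \RR^j\Gamma_p(U_p,k_K)$. Using the exact triangle relating sections with support in $p$ to the inclusion $U_p^*\cap K \hookrightarrow U_p\cap K$, together with the fact that $k_{K\vert U_p}$ is the constant sheaf on the contractible space $U_p\cap K$ (which has $p$ as initial point, hence is acyclic with $H^0 = k$), one gets $H^j_p(U_p,k_K) \cong \widetilde H^{j-1}(U_p^*\cap K,k)$; and via the order-preserving homeomorphism $\Link_K(p)\overset\sim\to U_p^*\cap K$ recalled in the preliminaries, this is $\widetilde H^{j-1}_{\text{red}}(\Link_K(p),k)$. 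Substituting $j = i - c_p$ gives the stated formula. The main obstacle is the bookkeeping in the reduction-to-cohomology-with-support step: one must carefully check the degree shift by $1$ in $H^j_p(U_p,k_K)\cong\widetilde H^{j-1}(\Link_K(p),k)$ and the conventions for reduced cohomology of the empty complex (when $p$ is a facet, $\Link_K(p) = \{\0\}$, whose reduced cohomology is concentrated so that only $H^{c_p}_\mm(R_{C_p})$ survives, recovering the polynomial-ring case), but this is standard once the sheaf-theoretic formula \eqref{formulaD(F)} is in hand; no genuinely new difficulty arises beyond what Proposition \ref{residuo-descomp} and Theorem \ref{Hochster} already provide.
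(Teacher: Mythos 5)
Your proposal is correct in substance, and its second half coincides with the paper's argument: the substitution of the decomposition of Proposition \ref{residuo-descomp}(a), the evaluation of each summand by formula \eqref{formulaD(F)}, the use of the $k$-freeness of $H^{c_p}_\mm(R_{C_p})$ to replace $\overset\LL\otimes_k$ by $\otimes_k$ and take cohomology, the vanishing for $p\notin\overline{\supp(F)}$, and the identification $H^j_p(U_p,k_K)\cong H^{j-1}_{\mathrm{red}}(\Link_K(p),k)$ via the triangle for $U_p^*\subset U_p$ and the homeomorphism with the link are all exactly as in the paper. Where you genuinely deviate is in how you reach the intermediate identity $\RR\Gamma_\mm(\pi^\stella F)\cong\RR\Hom^\pun_{\A^n_{\FF_1}}(D(F),\pi_\stella H^n_\mm(\Omega^n_{R/k}))[-n]$: the paper applies Theorem \ref{Hochster} with $M=R/\mm^l$ and passes to the direct limit over $l$, using that $\pi^\stella D(F)$ is perfect to commute the colimit past $\RR\Hom^\pun$ and that $\ilim{l}D(R/\mm^l)=\RR\Gamma_\mm\Omega^n_{R/k}$; you never apply Theorem \ref{Hochster} as stated, but instead invoke local duality directly and then reuse $D(\pi^\stella F)=\pi^\stella D(F)$ and adjunction --- in effect re-running the proof of Theorem \ref{Hochster} with $\RR\Gamma_\mm\Omega^n_{R/k}$ in place of $D(M)$. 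Your route buys you the elimination of the limit-over-$l$ argument; the paper's route keeps the theorem a formal corollary of the already-proved Theorem \ref{Hochster} plus the elementary observation $\RR\Gamma_\mm\Omega^n_{R/k}=H^n_\mm(\Omega^n_{R/k})[-n]$, with no appeal to local duality as an external input. One caveat: the identity you record, $\RR\Gamma_\mm(N)\cong\RR\Hom_R^\pun(D(N),H^n_\mm(\Omega^n_{R/k}))[-n]$, is \emph{not} valid for arbitrary complexes $N$ (the functor $D$ turns infinite direct sums into products, which $\RR\Hom_R^\pun(-,H^n_\mm(\Omega^n_{R/k}))$ does not convert back into sums); it does hold for perfect $N$, via $\RR\Gamma_\mm(N)\simeq N\overset\LL\otimes_R\RR\Gamma_\mm(R)$ together with biduality and the invertibility of $\Omega^n_{R/k}$, and that is precisely the case you use since $\pi^\stella F$ is perfect --- but the perfectness hypothesis should be stated where the identity is introduced, not only later when commuting the direct sum.
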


\begin{proof} By Theorem \ref{Hochster} one has
\[ \RR\Hom_R^\pun(R/\mm^l,\pi^\stella F)= \RR\Hom_{\A^n_{\FF_1}}^\pun(D(F),\pi_\stella D(R/\mm^l)).\] Taking direct limit on $l$, one obtains 
\begin{equation}\label{Hoch} \RR\Gamma_\mm(\pi^\stella F) = \RR\Hom_{\A^n_{\FF_1}}^\pun(D(F),\pi_\stella H^n_\mm(\Omega^n_{R/k}))[-n].\end{equation} Indeed,
\[\aligned &\RR\Gamma_\mm(\pi^\stella F)  = \ilim{l} \RR\Hom_R^\pun(R/\mm^l,\pi^\stella F) = \ilim{l}\RR\Hom_{\A^n_{\FF_1}}^\pun(D(F),\pi_\stella D(R/\mm^l)) 
\\ & = \ilim{l}\RR\Hom_{\A^n_{\FF_1}}^\pun(\pi^\stella D(F), D(R/\mm^l)) \overset{\pi^\stella D(F)\text{ is perfect}}{====}  \RR\Hom_{\A^n_{\FF_1}}^\pun(\pi^\stella D(F), \ilim{l} D(R/\mm^l))
\\ & =    \RR\Hom_{R}^\pun(\pi^\stella D(F), \RR\Gamma_\mm \Omega^n_{R/k})= \RR\Hom_{\A^n_{\FF_1}}^\pun( D(F), \pi_\stella\RR\Gamma_\mm \Omega^n_{R/k}) \endaligned\] and we conclude because $\RR\Gamma_\mm \Omega^n_{R/k}=H^n_\mm(\Omega^n_{R/k})[-n]$.

Now, combining  \eqref{Hoch} and Proposition \ref{residuo-descomp},(a), we obtain 
\[  \RR\Gamma_\mm(\pi^\stella F)= \underset{p\in\A^n_{\FF_1}}\bigoplus\, \RR\Hom_{\A^n_{\FF_1}}^\pun(D(F), k_{C_p}\otimes_k H^{c_p}_\mm(R_{C_p}))[-n]\] and one concludes because
\[ \RR \Hom_{\A^n_{\FF_1}}^\pun( D(F), k_{C_p}\otimes_k H^{c_p}_\mm(R_{C_p})) \overset{\eqref{formulaD(F)}}  = \RR\Gamma_p(U_p,F)[n-c_p] \otimes_k H^{c_p}_\mm(R_{C_p}), \] which is zero if $p\notin \overline{\supp(F)}$.
Finally, for $F=k_K$, the formula follows from the equality $H^i_p(U_p, k_K)=H^{i-1}_\text{red}(\Link_K(p),k)$. Indeed, let us denote $U_p^*=U_p-\{p\}$ and $\RR\Gamma(U_p,\quad)$ (resp. $\RR\Gamma(U_p^*,\quad)$) the right derived functor of the functor of sections in $U_p$ (resp. in $U_p^*$); one has the exact triangle 
\[\displaystyle \RR\Gamma_p(U_p,k_K)\to \underset{\underset{\text{\small $k$}} \Vert  } {\RR\Gamma(U_p,k_K)}\to \underset{\underset{ \text{\small $\RR\Gamma (U_p^*\cap K,k)$}}\Vert} {\RR\Gamma(U_p^*,k_K)}\] and then $\RR\Gamma_p(U_p,k_K)= \RR\Gamma_{\text{red}} (U_p^*\cap K,k)[-1]$;  one concludes by the homeomorphism $\Link_K(p)\simeq U_p^*\cap K$.
\end{proof}


\begin{rems}\label{residuo}{\em (1) If $k$ is a field, one can immediately compute the Hilbert series of $H^i_\mm(\pi^\stella F)$ from Theorem \ref{cor2}, since $H^{c_p}_\mm(R_{C_p})=\underset{i\in p}\bigotimes\, \frac{1}{x_i}k[x_i^{-1} ]$. In particular,  one obtains Hochster's formula for the Hilbert series of the Stanley--Reisner ring $R_K$ (\cite[Section II.4]{St}).

(2) The $R$-module structure of the right hand term $$\underset{p\in \overline{\supp(F)}}\bigoplus\, H^{i-c_p}_p(U_p,F)\otimes_k H^{c_p}_\mm(R_{C_p})$$ can be tracked from the proof, as follows. First, let us consider the case $n=1$, and the splitting $\pi_\stella I=k_{\{\0\}}\oplus H^1_{(x)}(k[x])$. The multiplication $I\overset{\cdot x}\to I$ induces a morphism of sheaves $\pi_\stella I\overset{\pi_\stella(\cdot x)}\longrightarrow \pi_\stella I$, which, via the splitting, becomes the morphism
\[ k_{\{\0\}}\oplus H^1_{(x)}(k[x]) \to k_{\{\0\}}\oplus H^1_{(x)}(k[x])\] defined by the matrix 
\[\begin{pmatrix} 0 & \Res_0\\ 0 & x\end{pmatrix}\] where $x\colon H^1_{(x)}(k[x])\to H^1_{(x)}(k[x])$ is just the multiplication by $x$ and $\Res_0\colon H^1_{(x)}(k[x])\to k_{\{\0\}}$ is the morphism corresponding to $\Res_0\colon H^1_{(x)}(k[x])\to k$, $P\mapsto \Res_0(P\di x)$, under the equaltity $\Hom_{\A^n_{\FF_1}}(H^1_{(x)}(k[x]), k_{\{\0\}})=\Hom_k(H^1_{(x)}(k[x]),k)$.

Then, the multiplication by $x_j$ in the direct sum $\underset{p\in \overline{\supp(F)}}\bigoplus\, H^{i-c_p}_p(U_p,F)\otimes_k H^{c_p}_\mm(R_{C_p})$ is, on each factor $H^{i-c_p}_p(U_p,F)\otimes_k H^{c_p}_\mm(R_{C_p})$, the morphism:

(a) If $j\in p$, it is the morphism (let us denote $p'=p-\{j\}$)
\[ \aligned  H^{i-c_p}_p(U_p,F)\otimes_k H^{c_p}_\mm(R_{C_p}) \longrightarrow & H^{i-c_p}_p(U_p,F)\otimes_k H^{c_p}_\mm(R_{C_p}) \\ &\oplus H^{i-c_p+1}_{p'}(U_{p'},F)\otimes_k H^{c_p-1}_\mm(R_{C_{p'}}) \endaligned\] whose first component is just multiplication by $x_j$ and the second one is the morphism 
\[ H^{i-c_p}_p(U_p,F)\otimes_k H^{c_p}_\mm(R_{C_p})\overset{\delta_{pp'}^i\otimes \Res_{x_j=0}} \longrightarrow H^{i-c_p+1}_{p'}(U_{p'},F)\otimes_k H^{c_p-1}_\mm(R_{C_{p'}}) \] where \[ \delta_{pp'}^i\colon H^{i-c_p}_p(U_p,F)\to H^{i-c_p+1}_{p'}(U_{p'},F)\] is the natural connecting and
\[ \Res_{x_j=0}\colon H^{c_p}_\mm(R_{C_p}) \to H^{c_p-1}_\mm(R_{C_{p'}})\] is the morphism induced by $\Res_0\colon H^1_\mm(k[x_j])\to k$, taking into account the equality
\[ H^{c_p}_\mm(R_{C_p})= H^{c_p-1}_\mm(R_{C_{p'}})\otimes_k H^1_\mm(k[x_j]) \] induced by the equality $R_{C_p}=R_{C_{p'}}\otimes_k k[x_j].$

(b) The null morphism if $j\notin p$.
}\end{rems}

In \cite[Thm. 1]{Mi}, Miyazaki computes the degree-wise components of $\Ext^i_R(R/\mm_l,R_K)$ for any closed subset $K$ of $\A^n_{\FF_1}$. The next theorem generalizes Miyazaki's result for any pointwise perfect complex $F$ on $\A^n_{\FF_1}$. It is a consequence of Theorem \ref{Hochster} and Proposition \ref{residuo-descomp}, (b).


\begin{thm}\label{Hochster-extens} Let us denote $\mm_l =(x_1^l,\dots ,x_n^l)$. For any pointwise perfect $F\in D_{\text{\rm perf}}(\A^n_{\FF_1})$, one has a canonical decomposition of $\ZZ^n$-graded $k$-modules:
\[ \Ext^i_R(R/\mm_l,\pi^\stella F)= \underset{p\leq q\in\overline{\supp(F)}}\bigoplus \,     H^{i-c_q}_q(U_q,F)\otimes_k  (R_{C_p}/\mm_{l-1})(l_q ).
\] Taking $F=k_K$, with $K$ a closed subset of $\A^n_{\FF_1}$,one obtains 
\[ \Ext^i_R(R/\mm_l,R_K)= \underset{p\leq q\in K}\bigoplus \,     H^{i-c_q-1}_\text{\rm red}(\Link_K(q),k)\otimes_k  (R_{C_p}/\mm_{l-1})(l_q ), 
\] and taking the $\alpha$-degree component in this equality (with $\alpha\in\ZZ^n$) one obtains Miyazaki's result (\cite[Thm. 1]{Mi}).
\end{thm}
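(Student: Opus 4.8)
The plan is to combine Theorem~\ref{Hochster} with the decomposition of Proposition~\ref{residuo-descomp},(b), exactly as Theorem~\ref{cor2} combined Theorem~\ref{Hochster} with Proposition~\ref{residuo-descomp},(a). First I would apply Theorem~\ref{Hochster} with $M=R/\mm_l$, observing that $D(R/\mm_l)=\RR\Hom_R^\pun(R/\mm_l,\Omega^n_{R/k})$ has cohomology concentrated in degree $n$ equal to $\Hom_R(R/\mm_l,H^n_\mm(\Omega^n_{R/k}))$ (this is the standard fact that $\RR\Hom_R(R/\mm_l,\Omega^n_{R/k})$ is, up to the shift $[-n]$, the module $(R/\mm_l)^\ast$ placed in degree $n$, since $R/\mm_l$ is a complete intersection quotient of the Gorenstein ring $R$). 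Hence $\pi_\stella D(R/\mm_l)=\pi_\stella\Hom_R(R/\mm_l,H^n_\mm(\Omega^n_{R/k}))[-n]$, and
\[ \RR\Hom_R^\pun(R/\mm_l,\pi^\stella F)=\RR\Hom_{\A^n_{\FF_1}}^\pun\bigl(D(F),\pi_\stella\Hom_R(R/\mm_l,H^n_\mm(\Omega^n_{R/k}))\bigr)[-n]. \]

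Next I would substitute the decomposition of Proposition~\ref{residuo-descomp},(b), namely
\[ \pi_\stella\Hom_R(R/\mm_l,H^n_\mm(\Omega^n_{R/k}))=\underset{p\leq q}\bigoplus\,(R_{C_p}/\mm_{l-1})\otimes_k k_{C_q}\otimes_k k_{U_{q-p}}(l_q), \]
and pull the $\RR\Hom_{\A^n_{\FF_1}}^\pun(D(F),-)$ through the direct sum. The key computation is then to evaluate $\RR\Hom_{\A^n_{\FF_1}}^\pun\bigl(D(F),k_{C_q}\otimes_k k_{U_{q-p}}\otimes_k E\bigr)$ for $E$ a $k$-module; here I would use the identification of sheaves on $\A^n_{\FF_1}=\A^1_{\FF_1}\times\cdots\times\A^1_{\FF_1}$ and the fact that $k_{C_q}\otimes_k k_{U_{q-p}}$ is, by the $\boxtimes$-description, $k_{C_r}$ for the simplicial complex $C_q\cap U_{q-p}$ suitably interpreted --- more precisely one checks stalkwise that the factor $k_{U_{q-p}}$ only restricts the open locus, so that $k_{C_q}\otimes_k k_{U_{q-p}}$ is supported on $C_q\cap\overline{U_{q-p}}$ and Formula~\eqref{formulaD(F)} applies after identifying this with an appropriate $k_{C_{q'}}$. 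The cleanest route is: since $F$ is pointwise perfect and $k$ appears in $E=(R_{C_p}/\mm_{l-1})(l_q)$ only as a flat (indeed free) $k$-module --- wait, it need not be flat over $k$ in general, so instead I would keep $\overset\LL\otimes_k$ and use \eqref{formulaD(F)} directly, writing $k_{C_q}\otimes_k k_{U_{q-p}}(l_q)=k_{U_{q-p}}(l_q)\otimes_k k_{C_q}$ and applying \eqref{formulaD(F)} with the constant complex $E=k_{U_{q-p}}(l_q)$... but $k_{U_{q-p}}$ is not constant. The honest fix: restrict everything to $U_q^\circ$ or use that $\RR\Hom(D(F),k_{C_q}\otimes_k G)=\RR\Gamma_q(U_q,F)\overset\LL\otimes_k G_q[n-c_q]$ for any sheaf $G$, by the same argument proving \eqref{formulaD(F)} --- this is the natural generalization and I would state and use it.

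Granting that, one gets $\RR\Hom_{\A^n_{\FF_1}}^\pun(D(F),(R_{C_p}/\mm_{l-1})\otimes_k k_{C_q}\otimes_k k_{U_{q-p}}(l_q))=\RR\Gamma_q(U_q,F)\overset\LL\otimes_k(R_{C_p}/\mm_{l-1})(l_q)[n-c_q]$, which vanishes unless $q\in\overline{\supp(F)}$ (note $(k_{U_{q-p}})_q=k$ since $q\geq q-p$, so the stalk at $q$ survives). Since $F$ is pointwise perfect, $\RR\Gamma_q(U_q,F)$ is perfect over $k$; taking $i$-th cohomology and shifting the $[-n]$ into the grading gives
\[ \Ext^i_R(R/\mm_l,\pi^\stella F)=\underset{p\leq q\in\overline{\supp(F)}}\bigoplus\,H^{i-c_q}_q(U_q,F)\otimes_k(R_{C_p}/\mm_{l-1})(l_q) \]
(absorbing $n-c_q-n=-c_q$ into the cohomological index, and noting no higher $\Tor$ terms appear after taking total cohomology because $(R_{C_p}/\mm_{l-1})$ is $k$-free --- it is a free $k$-module with basis the relevant monomials, so $\overset\LL\otimes_k=\otimes_k$ on that factor). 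For $F=k_K$ one substitutes $H^{i-c_q}_q(U_q,k_K)=H^{i-c_q-1}_{\mathrm{red}}(\Link_K(q),k)$ as established in the proof of Theorem~\ref{cor2}, yielding the stated formula; matching $\ZZ^n$-degrees recovers Miyazaki's Theorem~1. The main obstacle is the sheaf-cohomology computation of $\RR\Hom_{\A^n_{\FF_1}}^\pun(D(F),k_{C_q}\otimes_k k_{U_{q-p}}\otimes_k E)$ --- i.e.\ checking that the extra factor $k_{U_{q-p}}$ does not disturb Formula~\eqref{formulaD(F)} beyond replacing $E\otimes_k k_{C_q}$ by its twist and that the support condition comes out as $q\in\overline{\supp(F)}$ rather than something involving $p$; everything else is bookkeeping with the grading shifts $l_q$ and the additivity already packaged in Propositions~\ref{product} and~\ref{residuo-descomp}.
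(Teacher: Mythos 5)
Your overall strategy coincides with the paper's: apply Theorem \ref{Hochster} with $M=R/\mm_l$, identify $D(R/\mm_l)=\Hom_R(R/\mm_l,H^n_\mm(\Omega^n_{R/k}))[-n]$, substitute the decomposition of Proposition \ref{residuo-descomp},(b), pull the finite free $k$-module $R_{C_p}/\mm_{l-1}$ out of the $\RR\Hom$, and finish with $H^{i-c_q}_q(U_q,k_K)=H^{i-c_q-1}_{\mathrm{red}}(\Link_K(q),k)$ as in Theorem \ref{cor2}. All of that bookkeeping is correct. The gap is exactly at the step you yourself single out as the main obstacle, and the fix you propose does not work: the ``natural generalization'' $\RR\Hom_{\A^n_{\FF_1}}^\pun(D(F),k_{C_q}\otimes_k G)=\RR\Gamma_q(U_q,F)\overset\LL\otimes_k G_q\,[n-c_q]$ for an \emph{arbitrary} sheaf $G$ is false, so it cannot follow ``by the same argument proving \eqref{formulaD(F)}''. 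Concretely, take $k$ a field, $n=1$, $q=\un$ (so $k_{C_q}=k$ and $n-c_q=0$), $F=k_{\{\un\}}$ and $G=k_{\{\0\}}$. Then $D(F)=\RR\HHom^\pun(k_{\{\un\}},k_{\{\un\}})=k$ is the constant sheaf, so the left-hand side is $\RR\Gamma(\A^1_{\FF_1},k_{\{\0\}})=k$, while the right-hand side is $F_\un\otimes_k G_\un=0$ since $G_\un=0$. The point is that $\RR\Hom^\pun(D(F),k_{C_q}\otimes_k G)$ sees the stalks of $G$ along all of $C_q$, not just $G_q$; formula \eqref{formulaD(F)} genuinely uses that the coefficient complex $E$ is constant. (Your first attempted route fails for a similar reason: $k_{C_q}\otimes_k k_{U_{q-p}}$ is the constant sheaf on the locally closed interval $C_q\cap U_{q-p}$, which is not of the form $k_{C_{q'}}$, so \eqref{formulaD(F)} does not apply to it directly.)

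What the theorem actually needs is only the special case $G=k_{U_y}$ with $y\leq x$ (here $x=q$, $y=q-p$), namely $\RR\Hom_{\A^n_{\FF_1}}^\pun(D(F),k_{C_x}\otimes_k k_{U_y})=\RR\Gamma_x(U_x,F)[n-c_x]$, and this requires an argument, which is the paper's one missing ingredient relative to your write-up: write $k_{C_x}\otimes_k k_{U_y}=j_!j^{-1}k_{C_x}$ for the open inclusion $j\colon U_y\hookrightarrow\A^n_{\FF_1}$, reduce to $\RR\Hom^\pun_{U_y}(j^{-1}D(F),k_{C_x\cap U_y})$, identify $U_y\simeq\A^{n-c_y}_{\FF_1}$ and use $j^{-1}D(F)=D(j^{-1}F)$ (valid because $\un\in U_y$, so $j^{-1}k_{\{\un\}}=k_{\{\un\}}$), and only then apply \eqref{formulaD(F)} on the smaller space; the shift comes out as $n-c_y-\dim(C_x\cap U_y)=n-c_x$, and the summand vanishes when $x\notin\overline{\supp(F)}$, confirming your expectation that the support condition involves $q$ alone. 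With this lemma in place of the false general claim, the rest of your argument goes through and is the paper's proof.
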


\begin{proof} By Theorem \ref{Hochster}
\[ \RR\Hom_R^\pun(R/\mm_l,\pi^\stella F)=\RR\Hom_{\A^n_{\FF_1}}^\pun(D(F),\pi_\stella D(R/\mm_l)).\] Now, $D(R/\mm_l)=\Hom_R(R/\mm_l, H^n_\mm(\Omega^n_{R/k})) [-n]$, so  $$\pi_\stella D(R/\mm_l)= \pi_\stella \Hom_R(R/\mm_l, H^n_\mm(\Omega^n_{R/k}))[-n].$$   By Proposition \ref{residuo-descomp}, (b), 
\[\aligned  \RR\Hom_R^\pun(R/\mm_l,\pi^\stella F)&=  \underset{q\geq p}\bigoplus \, \RR\Hom_{\A^n_{\FF_1}}^\pun(D(F),k_{C_q}\otimes_k k_{U_{q-p}}\otimes_k (R_{C_p}/\mm_{l-1})(l_q ))[-n]
\\ & = \underset{q\geq p}\bigoplus \, \RR\Hom_{\A^n_{\FF_1}}^\pun(D(F),k_{C_q}\otimes_k k_{U_{q-p}})\otimes_k (R_{C_p}/\mm_{l-1})(l_q )[-n]
\endaligned\] where the last equality follows because $R_{C_p}/\mm_{l-1}$ is a finite and free $k$-module. Then we are reduced to prove  the following formula (for any $x\geq y\in\A^n_{\FF_1}$):
\[ \RR\Hom_{\A^n_{\FF_1}}^\pun(D(F),k_{C_x}\otimes_k k_{U_y}) = 
 \RR\Gamma_x( U_x,F)  [n-c_x] \quad(\text{which is zero if } x\notin\overline{\supp(F)}).\]
Let us denote $j\colon U_y\hookrightarrow\A^n_{\FF_1}$, $C_x^{y}$ the closure of $x$ in $U_y$ (in other words, $C_x^{y}=C_x\cap U_y=j^{-1}(C_x)$). Notice that $U_y=\A^{n-c_y}_{\FF_1}, q\mapsto q-y $, and $j^{-1}k_{\{\un\}}=k_{\{\un\}}$, so $j^{-1}D(F)=D(j^{-1}F)$. Then
 $$\aligned   \RR\Hom_{\A^n_{\FF_1}}^\pun(D(F),k_{C_x}\otimes_k k_{U_y} )&= \RR\Hom_{\A^n_{\FF_1}}^\pun(D(F),j_!j^{-1}k_{C_x}) \\ &=   \RR\Hom_{\A^{n-c_y}_{\FF_1}}^\pun(j^{-1}D( F), k_{C_x^{y}}) \quad (\text{adjunction  between } j^{-1} \text{ and } j_!)\\ &= \RR\Hom_{\A^{n-c_y}_{\FF_1}}^\pun(D(j^{-1}F), k_{C_x^{y}})
 \\ & \overset{\eqref{formulaD(F)}} = \RR\Gamma_x(U_x,F)[n-c_y-\dim C_x^{y}]\endaligned$$ and we conclude because $\dim C_x^y = c_x-c_y$.  
 
\end{proof}

\subsection{Comparison with other Hochster's type formulae}

Let us compare our results with other Hochster's type formulae obtained more recently with a different approach (see \cite{BBR} and specially \cite{ABZ}). Let us first consider Hochster's formula for local cohomology. Let $K$ be a finite simplicial complex and $R_K$ its associated Stanley-Reisner ring. The classical Hochster's formula computes $H^i_\mm(R_K)$, when $k$ is a field, and Yanagawa generalizes this result for any squarefree module (over a field) and our Theorem \ref{cor2} generalizes Yanagawa's result for any ring $k$ and any complex of squarefree modules. A different approach to compute  $H^i_\mm(R_K)$, which goes back to Yuzvinsky (\cite{Yuz}), is to see $R_K$ as the global sections of a sheaf of $k$-algebras on the poset $K^{\rm op}$: for each $p\in K$, let $R_p=R_{C_p}$ and, for any $p\leq q$, $R_q\to R_p$  the natural epimorphism. We have then a sheaf $\CR_K$ on $K^{\rm op}$, defined by $(\CR_K)_p=R_p$. The global sections of $\CR_K$ is the inverse limit of the ${R_p}'s$, which amounts to be $R_K$: $$\Gamma(K^{\rm op},\CR_K)=\plim{p\in K} R_p=R_K.$$

From this point of view, Hochster's formula gives a description of the local cohomology of the limit $\plim{p\in K} R_p$ in terms of the local cohomology of the ${R_p}'s$. This  has been generalized by replacing the simplicial complex $K$ by an arbitrary poset $P$ and considering a sheaf of $k$-algebras $\CR$ on $P$ (as in \cite{BBR}) or, more generally, a sheaf $\CR$ of $A$-modules on $P$ (as in \cite{ABZ}). Under certain ``degeneracy'' conditions, one obtains a Hochster's type result: the local cohomology of the limit $\plim{} R_p$ is computed in terms of the local cohomology of the ${R_p}'s$. But Yanagawa's result (or our Theorem \ref{cor2}) is not obtained from this perpective. 

Regarding our Theorem \ref{Hochster-extens}, or Miyazaki's result, something similar happens. There are Hochster's type results for $\Ext$'s, obtained in \cite{ABZ}, again under the perspective of comparing the $\Ext$ of a limit $\plim{p\in P}R_p$ with the $\Ext$ of the ${R_p}'s$. But these results do not yield Miyazaki's result or our Theorem \ref{Hochster-extens}. For example, let $\mm$ be a maximal ideal of a noetherian ring $A$, $J$ an $\mm$-primary ideal (for example, $J=\mm^l$) and $P_p$  an inverse system  of finitely generated $A$-modules; then $\Ext^i_A(A/J, P_p)$ is a finitely generated module over the Artinian ring $A/J$, and then the degeneracy condition of \cite[Thm. 7.1]{ABZ}

$$\Hom_A(\Ext^{d_p}_A(A/J, P_p), \Ext^{d_q}_A(A/J, P_q)=0$$ is never satisfied if both $\Ext^{d_p}_A(A/J, P_p)$ and $\Ext^{d_q}_A(A/J, P_q)$ are non zero.

\bigskip

\noindent Statements and Declarations\medskip

\noindent{\bf Declarations of interest:} none.

\noindent{\bf Data availability statement.} Data sharing is not applicable to this article as no datasets were generated or analyzed during the current study.

\end{document}